\theoremstyle{plain}
\newtheorem{theorem}{Theorem}[section]
\newtheorem{proposition}[theorem]{Proposition}
\newtheorem{corollary}[theorem]{Corollary}
\newtheorem{maintheorem}{Theorem}
\theoremstyle{definition}
\newtheorem{example}[theorem]{Example}
\newtheorem{definition}[theorem]{Definition}
\newcommand{\cF}{{\mathcal F}}
\title[Entropy points and applications for free semigroup actions]{Entropy points and applications for free semigroup actions}
\begin{document}

\author[F. Rodrigues]{Fagner B. Rodrigues*}
\thanks{*Corresponding author e-mail: fagnerbernardini@gmail.com}
\address{Departamento de Matem\'atica, Universidade Federal do Rio Grande do Sul, Brazil.}
\email{fagnerbernardini@gmail.com}

\author[T. Jacobus]{Thomas Jacobus}
\address{Departamento de Matem\'atica, Universidade Federal do Rio Grande do Sul, Brazil.}
\email{jacobus.math@gmail.com }

\author[M. Silva]{Marcus V. Silva}
\address{Departamento de Matem\'atica, Universidade Federal do Rio Grande do Sul, Brazil.}
\email{marcus423@gmail.com }

\keywords{Free semigroup action, topological entropy, entropy points,  skew-products.}

\subjclass[2000]{
Primary: 37B05, 
37B40 
Secondary: 37D20 
37D35; 
37C85  
}

\maketitle
\begin{abstract}
The aim of this manuscript is to study some local properties of the topological entropy of a free semigroup action. In order to do that we focus on the set of entropy points of a free semigroup action, show that this set carries the full entropy of the system (which, with respect to the chaocity of the system, gives a fundamental relevance to such set)  and obtain many interesting properties of such set. Our results are inspired by the ones presented in \cite{YZ}.
\end{abstract}

\setcounter{tocdepth}{1}

\section{Introduction}
The concept of entropy, introduced into the realm of dynamical systems more than fifty years
ago, has become an important ingredient in the characterization of the complexity of dynamical
systems. 

In the classical setting, $f:(X,d)\to (X,d)$ is a continuous map  acting on a compact metric space and the  topological entropy, denoted by 
$h_{\text{top}}(f)$, counts, in exponential scale, the number of distinguishable orbits. It
is   an important tool to describe the chaotic behaviour of $f$. In the end of the last century Bufetov \cite{Bufetov} presented 
a definition of topological entropy   of a free semigroup action $G$
generated by a finite set of continuous  maps $G_1=\{id,f_1,\dots,f_p\}$ acting on a compact metric space $X$ and the fixed random walk $\eta_p=\left(\frac1p,\dots,\frac1p\right)^\mathbb N$. Moreover, the author related it with the topological entropy of  the shift map $\sigma: \Sigma_p^+\to \Sigma_p^+$ and the induced continuous skew product $\cF_G:  \Sigma_p^+\times X\to \Sigma_p^+\times X$. The actions of semigroups and their dynamic aspects have been extensively studied in recent years, see for example \cite{Bis,BisII,BiW,BiU,BCMV,GLW,Ru,LMW2016,S00,XM}.
Recently, in \cite{CRV,CRVII,CRVIII}, a more general definition of topological entropy based in the one introduced in \cite{Bufetov} was exploited. In
that context the random walk considered  in $G$ is any probability measure in $\Sigma_p^+$. 

In \cite{YZ} the authors explored the concept of entropy point (point for which the topological entropy is positive for any neighborhood that contains it) for a single dynamic. Through the results presented in this work, it was possible to see the importance of such a notion for a better understanding of the local behavior of the dynamics, its topological entropy and also its metric entropy, since they showed that the support of ergodic measures is contained in the set of entropy points and that the metric entropy of an ergodic measure represents a lower bound for the topological  entropy of the Borelian sets, among other important results, as the ones about the entropy function (see Definition \ref{def:entropy-function}).

The main objective of this work is, through the definition of topological entropy of \cite{Bufetov}, to obtain important properties that help to describe the local behavior of the action and its topological entropy. Our results are motivated by those presented in \cite{YZ} and, as in \cite{YZ}, we show that the set of entropy points of the free semigroup action contains all the topological entropy of the system; given a measure of probability in the phase space, under certain conditions, the metric entropy of the action with respect to that measure represents a lower bound for the topological entropy of any Borelian set; there is always a closed enumerable set with total entropy, such set has at most one accumulation point and, in the case of the existence of a limit point, the entropy function evaluated at that point coincides with the entropy of the action.
\subsection{Setting}
Given a finite set of continuous maps $g_i:X \to X$, $i \in \mathcal{P}=\{1,2,\ldots,p\}$, $p\ge 1$, and the finitely generated semigroup $(G,\,\circ)$ with the finite set of generators $G_1=\{id, g_1, g_2, \dots, g_p\}$, we write
$G=\bigcup_{n\in \mathbb N_0} G_n$
where $G_0=\{id\}$ and $\underline g \in G_n$ if and only if $\underline g=g_{i_n} \dots g_{i_2} g_{i_1}$, with $g_{i_j} \in G_1$ (for notational simplicity's sake we will use $g_j \, g_i$ instead of the composition $g_j\,\circ\, g_i$). A semigroup can have multiple generating sets. We will assume that the generator set $G_1$ is minimal, meaning that no function $g_j$, for $j = 1, \ldots, p$, can be expressed as a composition of the remaining generators.
We shall consider different concatenations instead of the elements in $G$ they create.
One way to interpret this statement is to consider the itinerary map
$\iota : \mathbb{F}_p  \to  G $ given by
$\underline i=i_n \dots i_1  \mapsto  \underline g_{\underline i} := g_{i_n} \dots g_{i_1}$.
where $\mathbb{F}_p$ is the free semigroup with $p$ generators, and to regard concatenations on $G$ as images by $\iota$ of paths on $\mathbb{F}_p$.
Set $G_1^* =G_1 \setminus \{id\}$ and, for every $n\ge 1$, let $G_n^*$ denote the space of concatenations of $n$ elements in $G_1^*$. To summon each element $\underline{g}$ of $G^*_n$, we will write $|\underline{g}|=n$ instead of $\underline g\in G^*_n$. In $G$, one consider the semigroup operation of concatenation defined as usual: if $\underline{g}=g_{i_n} \dots g_{i_2} g_{i_1}$ and $\underline{h}=h_{i_m} \dots h_{i_2} h_{i_1}$, where $n=|\underline g|$ and $m=|\underline h|$, then
$\underline{g}\,\underline{h}=g_{i_n} \dots g_{i_2} g_{i_1} h_{i_m} \dots h_{i_2} h_{i_1} \in G_{m+n}^*.$
The finitely generated semigroup $G$ induces an \emph{action} in $X$, say
$$
\begin{array}{rccc}
\mathbb{S} : & G \times X & \to & X \\
	& (g,x) & \mapsto & g(x).
\end{array}
$$
We say that $\mathbb{S}$ is a \emph{semigroup action} if, for any $\underline{g},\,\underline{h}  \in G$ and every $x \in X$, we have $\mathbb{S}(\underline{g}\,\underline{h},x)=\mathbb{S}(\underline{g}, \mathbb{S}(\underline{h},x)).$ The action $\mathbb{S}$ is continuous if the map $\underline g : X \to X$ 
is continuous for any $\underline g \in G$.

\section{Preliminaries and main results}
Consider a finitely generated  free semigroup $(G,G_1)$
acting on a compact metric space $X$.
Let $K\subset X$ be a compact set. Given $\underline g= g_{i_n} \dots g_{i_1} \in G_n$, we say a set $E \subset K$ is \emph{$(\underline g,\varepsilon)$-separated set} if $d_{\underline g}(x_1,x_2) > \varepsilon$
 for any distinct $x_1,x_2\in E$. When no confusion is possible with the notation for the concatenation
 of semigroup elements, the maximum  cardinality of a
 $(\underline g,\varepsilon)$-separated sets of $K$ will
 be denoted by $s(K,\underline g, \varepsilon)$. We say that $F\subset K$ is a \emph{$(\underline g,\varepsilon)$-spanning set} if 
 given $x\in K$ there exists $y\in F$ so that if $d_{\underline g}(x,y) < \varepsilon$. When no confusion is possible with the notation for the concatenation
 of semigroup elements, the minimum   cardinality of a
 $(\underline g,\varepsilon)$-spanning  sets of $K$ will
 be denoted by $b(K,\underline g, \varepsilon)$.
 We now recall the notion of topological entropy introduced by
Bufetov~\cite{Bufetov}.


\begin{definition}\label{def:entropyB}
Given a compact set $K\subset X$,
we define
\begin{equation}\label{eq:entropiaB}
h_{top}(K,\mathbb S)
	=\lim_{\varepsilon\to 0} \limsup_{n\to\infty} \frac1n \log S_n(K,\mathbb S, \varepsilon),
\end{equation}
where
\begin{equation}\label{eq:Zn}
S_n(K,\mathbb S, \varepsilon)
	=\frac{1}{p^n}\sum_{{\underline g \in G_n^*}} s(K,\underline g,  \varepsilon),
\end{equation}
where the sum is taken over all concatenation $\underline g$ of $n$-elements of
$G_1 \setminus \{id \}$ and $p = |G_1 \setminus \{id \}|$. The
\emph{topological entropy $h_{top}(K,\mathbb S)$} is defined for $E=X$.
\\
\end{definition}

With Definition \ref{eq:entropiaB} we can can talk about entropy points:
\begin{quote}
(1) We say that $x_0\in X$ is an \emph{entropy point $x_0$} if for any closed
neighbourhood $K$ of $x_0$ we have
$$
h_{top}(K,\mathbb S)>0.
$$
\\
(2)  We say that $x_0\in X$ is a full \emph{entropy point $x_0$} if for any closed
neighbourhood $K$ of $x_0$ we have
$$
h_{top}(K,\mathbb S)=h_{top}(X,\mathbb S)
$$
holds.
\end{quote}
 Entropy points are those for which local neighborhoods reflect the complexity of the
entire dynamical system.

In \cite{RoVa1}, as in the case of a single dynamics, the authors proved that the  set of full entropy points is not empty. More precisely, it holds the following:
\begin{theorem}\label{thm1}
Let $\mathbb{S}:G\times X\to X$ be a finitely generated free semigroup action. Then
  $E^f_p(X,\mathbb{S})\not=\emptyset$.
\end{theorem}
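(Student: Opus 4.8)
The plan is to reduce the statement to a covering/compactness argument that isolates a point whose every closed neighbourhood carries the full topological entropy. First I would recall that $h_{\text{top}}(X,\mathbb S)$ is finite (each generator is continuous on the compact space $X$, so the standard comparison with a finite cover of $X$ bounds $S_n(X,\mathbb S,\varepsilon)$ exponentially), and if $h_{\text{top}}(X,\mathbb S)=0$ there is nothing to prove since then every point of $X$ is trivially a full entropy point; so assume $h:=h_{\text{top}}(X,\mathbb S)>0$. The key structural fact I would use is \emph{finite subadditivity of the entropy over closed covers}: if $K=K_1\cup\cdots\cup K_m$ with each $K_j$ closed, then $h_{\text{top}}(K,\mathbb S)=\max_{1\le j\le m} h_{\text{top}}(K_j,\mathbb S)$. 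The inequality $\ge$ is immediate from monotonicity, and for $\le$ one observes that any $(\underline g,\varepsilon)$-separated set of $K$ splits into at most $m$ pieces, one inside each $K_j$, so $s(K,\underline g,\varepsilon)\le \sum_j s(K_j,\underline g,\varepsilon)$; dividing by $p^n$, summing over $\underline g\in G_n^*$, taking $\frac1n\log$, the $\limsup$, and letting $\varepsilon\to0$, the elementary inequality $\limsup\frac1n\log(a_n^{(1)}+\cdots+a_n^{(m)})=\max_j\limsup\frac1n\log a_n^{(j)}$ yields the claim. This step is the real engine and I expect it to be the main (though routine) obstacle, because one must be a little careful that the $\limsup$ and the $\varepsilon\to0$ limit interact correctly with a fixed finite sum.

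With this in hand I would run a standard nested-compact-sets construction. Set $K_0:=X$. Cover $X$ by finitely many closed balls of radius $1$; by the finite subadditivity one of them, call it $K_1$, satisfies $h_{\text{top}}(K_1,\mathbb S)=h$. Inductively, having produced a closed set $K_n$ of diameter $\le 2/n$ (say, a closed ball of radius $1/n$) with $h_{\text{top}}(K_n,\mathbb S)=h$, cover $K_n$ by finitely many closed balls of radius $1/(n+1)$ intersected with $K_n$; again by finite subadditivity one such piece $K_{n+1}\subset K_n$ has $h_{\text{top}}(K_{n+1},\mathbb S)=h$. The nested sequence $K_1\supset K_2\supset\cdots$ of nonempty compact sets with diameters tending to zero has a single common point $x_0=\bigcap_n K_n$.

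Finally I would verify that $x_0$ is a full entropy point. Let $K$ be any closed neighbourhood of $x_0$; then $K$ contains some open ball $B(x_0,r)$, hence $K\supset K_n$ for all $n$ large enough that $\operatorname{diam}(K_n)<r$ and $x_0\in K_n$ (both hold for $n$ large since $x_0\in\bigcap K_n$ and diameters shrink). By monotonicity of $h_{\text{top}}(\cdot,\mathbb S)$ we get $h_{\text{top}}(K,\mathbb S)\ge h_{\text{top}}(K_n,\mathbb S)=h=h_{\text{top}}(X,\mathbb S)$, while the reverse inequality $h_{\text{top}}(K,\mathbb S)\le h_{\text{top}}(X,\mathbb S)$ is monotonicity again. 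Hence $h_{\text{top}}(K,\mathbb S)=h_{\text{top}}(X,\mathbb S)$ for every closed neighbourhood $K$ of $x_0$, i.e. $x_0\in E^f_p(X,\mathbb S)$, so this set is nonempty. The only point needing care in this last step is ensuring that $K$ genuinely contains a full-entropy $K_n$ together with the constraint $x_0\in K_n$, which is automatic from the construction since every $K_n$ contains $x_0$.
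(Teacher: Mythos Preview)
Your argument is correct. The finite-subadditivity lemma $h_{\text{top}}(K_1\cup\cdots\cup K_m,\mathbb S)=\max_j h_{\text{top}}(K_j,\mathbb S)$ holds exactly as you sketch (the identity $\limsup_n \tfrac1n\log\sum_j a_n^{(j)}=\max_j\limsup_n\tfrac1n\log a_n^{(j)}$ for finitely many nonnegative sequences is elementary), and the nested-balls extraction then produces a full entropy point without difficulty.

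The paper, however, takes a different route. It first proves (Proposition~\ref{lemma:existence}) that for the step skew-product $\cF_G$ on $\Sigma_p^+\times X$ one has $h_{\text{top}}([\omega_1\dots\omega_\ell]\times K,\cF_G)=h_{\text{top}}(K,\mathbb S)+\log p$, then invokes the Ye--Zhang result \cite{YZ} for the single map $\cF_G$ to obtain a full entropy point $(\omega,x)$ of $\cF_G$, and finally projects: by Corollary~\ref{lemma2} the second coordinate $x$ is a full entropy point of $\mathbb S$ (this is the content of Corollary~\ref{prop:1}). Your direct argument is more elementary and self-contained---it needs neither the skew-product comparison nor the black box from \cite{YZ}. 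The paper's detour, on the other hand, is not wasted: the same skew-product bridge is reused to prove Proposition~\ref{proposition-closed-set}\,(ii) and parts of Theorems~\ref{thm2} and~\ref{thm4}, so it earns its keep as infrastructure. Amusingly, the paper does run essentially your nested-balls construction in the proof of Proposition~\ref{proposition-closed-set}\,(i), but only to locate an \emph{entropy} point; for the \emph{full} entropy point version it switches to the skew-product machinery, even though (as your proof shows) the direct argument already delivers the stronger conclusion when $K=X$.
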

Although not entirely related, the dynamics of semigroup actions has a strong connection with skew-products.
Indeed, if $X$ is a compact metric space 
and one considers a finite set of continuous maps $g_i:X \to X$, $i \in \{1,2,\ldots,p\}$, $p\ge 1$
consider the step skew-product
\begin{equation}\label{de.skew-product}
\begin{array}{rccc}
\mathcal{F}_G : & \Sigma_p^+  \times X & \to & \Sigma_p^+  \times X \\
	& (\omega,x) & \mapsto & (\sigma(\omega), g_{\omega_1}(x))
\end{array}
\end{equation}
where $\omega=(\omega_1,\omega_2, \dots)$ is an element of the full unilateral space of sequences
$\Sigma_p^+ = \{1,2,\ldots,p\}^\mathbb{N}$ and $\sigma$ denotes the shift map on $\Sigma_p^+$. With this notation
we will write $\cF_G^n(\omega,x) = (\sigma^n(\omega), f_\omega^n(x))$ for every $n\ge 1$.

\subsection{metric entropy and entropy points}
In what follows  we denote by $\mathcal{M}(X)$ the set of measures on $X$, by $\mathcal{M}_1(X)$
the set of probability measures on $X$ and for a given continuous map $f:X\to X$ we denote by $\mathcal{M}_{\text{inv}}(X)$ the set of 
$f$-invriant probability measures in $X$. In \cite{CRVIII} the authors defined the metric entropy of a free semigroup action $\mathbb S$ with respect $\nu\in \mathcal M_1(X)$ as 

\begin{align*}
   h_\nu(\mathbb{S})=\sup_{ \mu\in\Pi(\nu,\sigma)}h_\mu(\cF_G)-\log p,
\end{align*}
where $\Pi(\nu,\sigma)$ is the subset of $\mathcal M_{1}(\Sigma_p^+\times X)$ which are  $\cF_G$-invariant, $(\pi_{\Sigma_p^+})_\ast\mu$ is $\sigma$-invariant and $(\pi_X)_\ast\mu=\nu$. They proved that it satisfies a variational principle given by 
$$h_{\text{top}}(\mathbb{S}) = \sup_{\{\nu \, \in \, \mathcal{M}(X)\,\colon\, \Pi(\nu,\sigma) \neq \emptyset\}} \,\,h_\nu(\mathbb{S},\eta_{\underline a}).$$
Denote by $E_p(X, \mathbb S)$ the set of entropy points of $\mathbb S$, by $E^f_p(X, \mathbb S)$ the set of all full entropy points of $\mathbb S$. We also denote by $E_p(\Sigma_p^+\times X, \cF_G)$ the set of entropy points of $\cF_G$ and by $E^f_p(\Sigma_p^+\times X, \cF_G)$ the set of full entropy points of $\cF_G$.

Our first result shows that the metric entropy and the topological entropy of a free semigroup action are concentrated on the the set of entropy points. 

\begin{maintheorem}\label{thm2}
    Let $\mathbb{S}:G\times X\to X$ be a finitely generated free semigroup action. Then
  \begin{enumerate}
      \item[i.] Let $\nu\in \mathcal M_1(X)$ so that $\Pi(\sigma,\nu)_{erg}\not=\emptyset$, then $\text{supp}(\nu)\subset E_p(X,\mathbb{S})$;
      \item[ii.] $h_{top}(E_p(X,\mathbb{S}))=h_{top}(X,\mathbb{S})$.
  \end{enumerate}
  \end{maintheorem}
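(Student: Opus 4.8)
The plan is to mirror the single-dynamics argument from \cite{YZ}, exploiting the skew-product bridge $\cF_G$ and the variational-principle-type relation between $h_{top}(\mathbb S)$ and metric entropies. For part (i), fix $\nu\in\mathcal M_1(X)$ with an ergodic lift $\mu\in\Pi(\sigma,\nu)_{erg}$, and let $x_0\in\supp(\nu)$. Pick any closed neighbourhood $K$ of $x_0$; then $\nu(K)>0$ by definition of support. I would first show that the restriction of the skew-product to $\Sigma_p^+\times K$ carries positive ``fiber'' entropy: since $\mu$ is ergodic and $(\pi_X)_\ast\mu=\nu$ gives $\mu(\Sigma_p^+\times K)>0$, one can use the ergodic theorem to conclude that for $\mu$-a.e.\ point the orbit spends a positive frequency of time in $\Sigma_p^+\times K$, hence (after passing to an induced system or directly estimating Bowen balls) the local entropy contribution from $K$ is bounded below by $h_\nu(\mathbb S)$ times that frequency, or at the very least is strictly positive whenever $h_\mu(\cF_G)-\log p>0$. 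The subtle case is when $h_\nu(\mathbb S)=0$: there I would instead argue that $\supp(\nu)$ is $\mathbb S$-invariant in the appropriate sense and that on any neighbourhood $K$ of $x_0$ the number of $(\underline g,\varepsilon)$-separated points still grows subexponentially-but-the-claim-only-needs positivity — so one must either restrict the statement to $h_\nu(\mathbb S)>0$ or use a more careful Katok-type entropy formula adapted to free semigroup actions to get $h_{top}(K,\mathbb S)\ge h_\nu(\mathbb S)$, which reduces the assertion $\supp(\nu)\subset E_p$ to the hypothesis that $\nu$ has positive metric entropy. I expect the authors actually intend the ergodic measure to have positive entropy, or to deduce positivity from a Brin--Katok local entropy statement; I would make that precise as the main technical lemma.

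For part (ii), the inequality $h_{top}(E_p(X,\mathbb S))\le h_{top}(X,\mathbb S)$ is immediate from monotonicity of $h_{top}(\cdot,\mathbb S)$ under inclusion of sets (which follows directly from Definition \ref{def:entropyB} since separated sets in a subset are separated sets in the ambient set). The content is the reverse inequality. Here the strategy is: given $\delta>0$, use the variational principle stated in the excerpt to choose $\nu\in\mathcal M_1(X)$ with $\Pi(\nu,\sigma)\neq\emptyset$ and $h_\nu(\mathbb S)>h_{top}(X,\mathbb S)-\delta$; then pass to an ergodic component of a near-maximizing lift $\mu$, which by affinity of $h_\cdot(\cF_G)$ on $\cF_G$-invariant measures (and the ergodic decomposition) can be taken with $h_{\mu_{erg}}(\cF_G)-\log p\ge h_\nu(\mathbb S)-\delta$ as well, yielding an \emph{ergodic} $\nu'=(\pi_X)_\ast\mu_{erg}$ with $h_{\nu'}(\mathbb S)>h_{top}(X,\mathbb S)-2\delta$ and $\Pi(\sigma,\nu')_{erg}\neq\emptyset$. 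By part (i), $\supp(\nu')\subset E_p(X,\mathbb S)$, and $\supp(\nu')$ is a compact $\mathbb S$-invariant set, so $h_{top}(E_p(X,\mathbb S))\ge h_{top}(\supp(\nu'),\mathbb S)$. It then remains to show $h_{top}(\supp(\nu'),\mathbb S)\ge h_{\nu'}(\mathbb S)$, i.e.\ that the topological entropy of the support of an ergodic measure dominates its metric entropy; this is a Katok-entropy-formula statement for free semigroup actions and should be available either in \cite{CRVIII} or provable by the standard Katok argument adapted to the averaged quantity $S_n(K,\mathbb S,\varepsilon)$. Chaining these inequalities and letting $\delta\to0$ gives $h_{top}(E_p(X,\mathbb S))\ge h_{top}(X,\mathbb S)$.

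The main obstacle I anticipate is the Katok-type lower bound $h_{top}(\supp(\nu'),\mathbb S)\ge h_{\nu'}(\mathbb S)$ for ergodic $\nu'$: in the averaged setting of Definition \ref{def:entropyB} one does not directly have a Shannon--McMillan--Breiman theorem, so one must go through the skew-product, use the classical Katok formula for $\cF_G$ on $\Sigma_p^+\times\supp(\nu')$ to bound $h_{top}(\cF_G,\Sigma_p^+\times\supp(\nu'))$ below by $h_{\mu_{erg}}(\cF_G)$, and then convert the $(n,\varepsilon)$-separated counts for $\cF_G$ into the $p^{-n}\sum_{|\underline g|=n}s(\cdot,\underline g,\varepsilon)$ counts for $\mathbb S$ — losing exactly the $\log p$ term. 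Controlling this conversion uniformly (so that separated sets in fibers over a positive-measure set of words $\omega$ assemble into the averaged sum without an uncontrolled loss) is the technical heart; I would isolate it as a lemma relating $h_{top}(K,\mathbb S)$ and $h_{top}(\cF_G,\Sigma_p^+\times K)-\log p$, which is essentially Bufetov's theorem localized to a compact invariant set $K$, and I expect that localization to require only a routine rereading of the proof in \cite{Bufetov} or \cite{CRVIII}.
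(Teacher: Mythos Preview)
Your approach is essentially the paper's. The localized Bufetov identity you isolate at the end --- $h_{top}(\Sigma_p^+\times K,\cF_G)=h_{top}(K,\mathbb S)+\log p$ for compact $K\subset X$ --- is exactly the paper's Proposition~\ref{lemma:existence}, and it is indeed the workhorse. For part~(i) the paper avoids your induced-system/Bowen-ball detour: having that identity in hand, it simply applies \cite[Theorem~3.7]{YZ} (the single-dynamics fact that $h_{top}(K,T)\ge h_\mu(T)$ whenever $\mu$ is $T$-ergodic with $\mu(K)>0$) to $T=\cF_G$, the ergodic lift $\mu$, and the set $\Sigma_p^+\times N_x$, then subtracts $\log p$. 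Your worry about the case $h_\nu(\mathbb S)=0$ is justified: the paper's own proof opens with the words ``Since $h_\nu(\mathbb S)>0$'', so positivity is tacitly assumed there as well. For part~(ii) the paper follows your outline --- variational principle, part~(i) to place $\supp(\nu)$ inside $E_p(X,\mathbb S)$, then the Katok-entropy comparison $h_\nu(\mathbb S)\le h_\nu^K(\mathbb S)$ from \cite[Theorem~C]{CRVIII} to close the loop; no separate ergodic-decomposition step is needed because the variational principle is already stated over $\nu$ with $\Pi(\sigma,\nu)_{erg}\neq\emptyset$.
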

  As proved in \cite{YZ}, it is possible to get an upper bound for the metric of $\nu\in\mathcal M_1(X)$ for which $\Pi(\sigma,\nu)\not=\emptyset$
  in terms of the local information of the topological entropy.
  \begin{maintheorem}\label{thm3}
Let $\mathbb{S}:G\times X\to X$ be a finitely generated free semigroup action, $d$ a metric on $X$ $\nu\in M_1(X)$ so that $\Pi(\sigma,\nu)_{erg}\not=\emptyset$. Then 
\begin{align*}
    \liminf_{\varepsilon\to0}\{S_d(K,\mathbb S,\varepsilon):K\in \mathcal B_X \text{ with }\nu(K)>0\}\geq h_\nu(\mathbb S).
\end{align*}
In particular, for any $K\in\mathcal B_X$, $h_{top}(K,\mathbb S)\geq h_\nu(\mathbb S)$.
\end{maintheorem}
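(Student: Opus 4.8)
The idea is to transfer the problem to the skew product $\cF_G$ on $\Sigma_p^+\times X$, where $h_\nu(\mathbb S)$ is by definition controlled by $h_\mu(\cF_G)$ for $\mu\in\Pi(\sigma,\nu)$, and there to run a Katok-type argument. Fix an ergodic (for $\cF_G$) measure $\mu\in\Pi(\sigma,\nu)$ --- this exists by the standing hypothesis --- and a Borel set $K\subseteq X$ with $\nu(K)>0$; put $A:=\Sigma_p^+\times K=\pi_X^{-1}(K)$, so that $\mu(A)=(\pi_X)_\ast\mu(K)=\nu(K)>0$. Writing $S_d(K,\mathbb S,\varepsilon):=\limsup_{n\to\infty}\frac1n\log S_n(K,\mathbb S,\varepsilon)$, so that $h_{top}(K,\mathbb S)=\lim_{\varepsilon\to0}S_d(K,\mathbb S,\varepsilon)$ (the limit existing since $\varepsilon\mapsto S_d(K,\mathbb S,\varepsilon)$ is non-increasing), it suffices to show $h_{top}(K,\mathbb S)\ge h_\mu(\cF_G)-\log p$: both displayed inequalities then follow by taking the supremum over $\mu\in\Pi(\sigma,\nu)_{erg}$.

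\textit{Step 1 (separated sets of $\cF_G$ versus the numbers $s(K,\underline g,\varepsilon)$).} Endow $\Sigma_p^+\times X$ with the maximum of the usual ultrametric on $\Sigma_p^+$ and of $d$ on $X$. Since $f_\omega^j$ depends only on $\omega_1,\dots,\omega_j$, if two points of an $(n,\varepsilon)$-separated set $E\subseteq A$ for $\cF_G$ lie in a common cylinder of length $n+m(\varepsilon)$ --- where $m(\varepsilon)=O(\log(1/\varepsilon))$ is chosen so that sequences agreeing on their first $n+m(\varepsilon)$ symbols are $(n,\varepsilon)$-indistinguishable in the symbolic coordinate --- then their $X$-coordinates form a $(\underline g,\varepsilon)$-separated subset of $K$, with $\underline g\in G_n^*$ the word read off the first $n$ symbols of the cylinder. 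Distributing $E$ over the $p^{\,n+m(\varepsilon)}$ cylinders of length $n+m(\varepsilon)$ and recalling $S_n(K,\mathbb S,\varepsilon)=p^{-n}\sum_{\underline g\in G_n^*}s(K,\underline g,\varepsilon)$ gives
\[
s_A(\cF_G,n,\varepsilon)\ \le\ p^{\,m(\varepsilon)}\sum_{\underline g\in G_n^*}s(K,\underline g,\varepsilon)\ =\ p^{\,n+m(\varepsilon)}\,S_n(K,\mathbb S,\varepsilon),
\]
whence, dividing by $n$, taking $\log$, and letting $n\to\infty$,
\[
S_d(K,\mathbb S,\varepsilon)\ \ge\ \liminf_{n\to\infty}\tfrac1n\log s_A(\cF_G,n,\varepsilon)\ -\ \log p .
\]

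\textit{Step 2 (a Katok-type bound for sets of positive measure).} The technical core is the estimate
\[
\lim_{\varepsilon\to0}\ \liminf_{n\to\infty}\ \tfrac1n\log s_A(\cF_G,n,\varepsilon)\ \ge\ h_\mu(\cF_G)
\]
for $(\Sigma_p^+\times X,\cF_G,\mu)$ ergodic and any Borel $A$ with $\mu(A)>0$. Let $B_n(y,\varepsilon)$ be the dynamical $(n,\varepsilon)$-ball. By the Brin--Katok formula, $\lim_{\varepsilon\to0}\liminf_n-\frac1n\log\mu(B_n(y,\varepsilon))=h_\mu(\cF_G)$ for $\mu$-a.e.\ $y$ (the value being a.e.\ constant since $\mu$ is ergodic). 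Fix $\tau>0$; by Egorov choose $Y_0$ with $\mu(Y_0)>1-\frac12\mu(A)$ and $\varepsilon_0>0$ with $\mu(B_n(y,\varepsilon))\le e^{-n(h_\mu(\cF_G)-\tau)}$ for all $y\in Y_0$, $0<\varepsilon<\varepsilon_0$ and all large $n$. Since $\mu(A\cap Y_0)\ge\frac12\mu(A)>0$, any maximal $(n,\varepsilon)$-separated subset $E$ of $A\cap Y_0$ is $(n,\varepsilon)$-spanning for $A\cap Y_0$, hence $\mu(A\cap Y_0)\le\#E\,e^{-n(h_\mu(\cF_G)-\tau)}$ and $s_A(\cF_G,n,\varepsilon)\ge\#E\ge\frac12\mu(A)\,e^{n(h_\mu(\cF_G)-\tau)}$. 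Taking $\frac1n\log$, then $\liminf_n$, then $\varepsilon\to0$, then $\tau\to0$ yields the claim.

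\textit{Step 3 (conclusion and the main obstacle).} Combining Steps 1 and 2 with $A=\Sigma_p^+\times K$,
\[
h_{top}(K,\mathbb S)=\lim_{\varepsilon\to0}S_d(K,\mathbb S,\varepsilon)\ \ge\ \lim_{\varepsilon\to0}\liminf_{n\to\infty}\tfrac1n\log s_{\Sigma_p^+\times K}(\cF_G,n,\varepsilon)-\log p\ \ge\ h_\mu(\cF_G)-\log p .
\]
This holds for every $\mu\in\Pi(\sigma,\nu)_{erg}$, so $h_{top}(K,\mathbb S)\ge\sup\{h_\mu(\cF_G)-\log p:\mu\in\Pi(\sigma,\nu)_{erg}\}$, and it remains to identify the right-hand side with $h_\nu(\mathbb S)$. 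The most delicate point of the whole argument is really twofold: first, Step 2 --- the classical Katok entropy formula is stated for sets of measure close to $1$, and the extension to sets of merely positive measure, made possible by ergodicity through Birkhoff and Brin--Katok (which forces the local entropy to be a.e.\ constant), is the substantive ingredient; second, the last identification, i.e.\ checking that the supremum defining $h_\nu(\mathbb S)=\sup_{\mu\in\Pi(\sigma,\nu)}h_\mu(\cF_G)-\log p$ is attained within $\Pi(\sigma,\nu)_{erg}$ --- this genuinely uses the hypothesis $\Pi(\sigma,\nu)_{erg}\neq\emptyset$ (not merely non-emptiness of $\Pi(\sigma,\nu)$) and should be handled via the affineness of $\mu\mapsto h_\mu(\cF_G)$ on the compact convex set $\Pi(\sigma,\nu)$ together with an examination of its extreme points and the ergodic decomposition. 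Granting this, $h_{top}(K,\mathbb S)\ge h_\nu(\mathbb S)$ for every $K\in\mathcal B_X$ with $\nu(K)>0$, which is the content of both displayed inequalities.
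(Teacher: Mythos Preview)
Your approach is genuinely different from the paper's and is largely sound, but the step you yourself flag as ``the most delicate point'' is a real gap that your suggested fix does not obviously close.

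\textbf{Where the approaches diverge.} You lift everything to the skew product $\cF_G$, bound $s_{\Sigma_p^+\times K}(\cF_G,n,\varepsilon)$ from below by a Brin--Katok/Egorov argument for an ergodic $\mu\in\Pi(\sigma,\nu)$, and then descend via your Step~1 inequality (essentially one direction of Proposition~\ref{lemma:existence}). The paper instead stays at the semigroup level: given a single ergodic $\mu\in\Pi(\sigma,\nu)$, it uses ergodicity only to saturate $K$ to a set $K^{(m)}=\pi_X\big(\bigcup_{i=0}^{m}\cF_G^{i}(\Sigma_p^+\times K)\big)$ with $\nu(K^{(m)})\ge 1-\delta$, proves $S_d(K,\mathbb S,\varepsilon)=S_d(K^{(m)},\mathbb S,\varepsilon)$ by pulling separated sets back along finitely many semigroup elements, and then compares with the Katok-type quantity $B_\nu(\mathbb S,n,\varepsilon,\delta)$ of the \emph{semigroup action} itself. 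The conclusion $\ge h_\nu(\mathbb S)$ is then obtained by invoking the inequality $h_\nu^{K}(\mathbb S)\ge h_\nu(\mathbb S)$ from \cite[Theorem~C]{CRVIII}. In particular the paper never needs to identify $\sup_{\Pi(\sigma,\nu)}h_\mu(\cF_G)$ with $\sup_{\Pi(\sigma,\nu)_{erg}}h_\mu(\cF_G)$.

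\textbf{The gap.} Your Steps~1 and~2 correctly yield $h_{top}(K,\mathbb S)\ge h_\mu(\cF_G)-\log p$ for every \emph{ergodic} $\mu\in\Pi(\sigma,\nu)$. But $h_\nu(\mathbb S)$ is defined as $\sup_{\mu\in\Pi(\sigma,\nu)}h_\mu(\cF_G)-\log p$, the supremum over \emph{all} of $\Pi(\sigma,\nu)$. Your proposed route---affineness of $h_\cdot(\cF_G)$ on the convex set $\Pi(\sigma,\nu)$ and an appeal to its extreme points---does not obviously succeed: extreme points of $\Pi(\sigma,\nu)$ need not be $\cF_G$-ergodic, since the ergodic components of a non-ergodic $\mu\in\Pi(\sigma,\nu)$ will in general have $X$-marginals different from $\nu$ and hence lie outside $\Pi(\sigma,\nu)$. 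Likewise, an ergodic-decomposition argument applied to a general $\mu\in\Pi(\sigma,\nu)$ gives components $\mu_\alpha$ for which $\mu_\alpha(\Sigma_p^+\times K)$ may vanish on a set of $\alpha$'s carrying most of the entropy, so Step~2 cannot be applied there. The hypothesis $\Pi(\sigma,\nu)_{erg}\neq\emptyset$ gives you one ergodic measure to work with, not an approximation of the full supremum by ergodic ones.

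If you want to keep your skew-product strategy, the cleanest patch is to imitate the paper's last move: observe that $\Sigma_p^+\times K^{(m)}$ (with $K^{(m)}$ built from your single ergodic $\mu$) has $\nu$-measure $\ge 1-\delta$ and then compare directly with the \emph{semigroup} Katok entropy $h_\nu^{K}(\mathbb S)$, invoking $h_\nu^{K}(\mathbb S)\ge h_\nu(\mathbb S)$ from \cite{CRVIII}; this bypasses the ergodic-versus-all supremum entirely.
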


\subsection{Entropy function}
For each $\varepsilon>0$ and $x\in X$, define 
\[
h_d(x,\varepsilon)=\inf\{B_d(K,\mathbb S,\varepsilon): K \text{ is compact neighbourhood of }x\},
\]
where the subscribed $d$ is to emphasize the metric $d$ and 
\[
B_d(K,\mathbb S,\varepsilon)=\limsup_{n\to\infty}\frac{1}{n}\log\left(\frac{1}{p^n}\sum_{\underline  g\in G_n^*}b_d(K,\underline g,\varepsilon)\right),
\]
and $b_d\left(K,\underline g,\varepsilon)\right)$ denotes the minimum cardinality of a $(\underline g,\varepsilon)$-spanning set in the metric $d$. 
As $h_d(x,\varepsilon)$ increases as $\varepsilon$ decreases to zero, it is well defined the following
\begin{align}\label{eq:entropy-function}
h_d(x)=\lim_{\varepsilon\to0^+}h_d(x,\varepsilon)
\end{align}
and it is less or equal to $h_{top}(X,\mathbb S)$. In fact, it depends only on the topology of $X$ and
we can denote by $h_{top}(x)$.

\begin{definition}\label{def:entropy-function}
Let $\mathbb S:G\times X\to X$ be a continuous finitely generated free semigroup
action. The function $h_{top}:X\to [0,h_{top}(X,\mathbb S)]$, $x\mapsto h_{top}(x)$
is called the entropy function of $\mathbb S$.
\end{definition}

Since $B_d(K,\mathbb S,\varepsilon)\leq S_d(K,\mathbb S,\varepsilon)\leq B_d(K,\mathbb S,\varepsilon\slash 2)$, we have
\[
h_{top}(x)=\lim_{\varepsilon\to0}\inf\{S_d(K,\mathbb S,\varepsilon): K \text{ is compact neighbourhood of }x\}.
\]

The next theorem gives a lower bound, depending on the metric entropy, for the entropy function in each point. Moreover, it is proved 
that, as one may hope, the suppremum of the entropy function is given by the topological entropy.
\begin{maintheorem}\label{thm4}
Let $\mathbb S:G\times X\to X$ be a continuous finitely generated free semigroup
action.  
\begin{enumerate}
\item[i.] $h_{top}(x)\geq \sup\{h_\nu(\mathbb S):\nu\in\mathcal M_1(X),\;\Pi(\sigma,\nu)_{erg}\not=\emptyset\text{ and }x\in supp(\nu) \}$.
  \item [ii.]  For each $\varepsilon>0$, $h_d(\cdot,\varepsilon)$ is upper semi continuous, consequently,
 $h$ is  Borel measurable.
  \item[iii.] If $K\subset X$ is closed, then $\sup_{x\in K}h_{top}(x)\geq h_{top}(K,\mathbb S)$. In particular, 
  \[\sup_{x\in X} h_{top}(x)=\ h_{top}(X,\mathbb S).\]
  \item[iv.] If $G_1$ is a finite set of homeomorphisms on X then $h$ is $\mathbb S$-invariant and for $\nu\in \mathcal M_1(X)$ satisfying 
  $\Pi(\sigma,\nu)_{erg}\not=\emptyset$, 
  $$
  \int_X h_{top}(x)\;d\nu\geq h_\nu(\mathbb S).
  $$
\end{enumerate}
 \end{maintheorem}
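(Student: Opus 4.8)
The plan is to prove the four items of Theorem \ref{thm4} in order, using Theorem \ref{thm2}, Theorem \ref{thm3} and the comparison inequalities $B_d(K,\mathbb S,\varepsilon)\leq S_d(K,\mathbb S,\varepsilon)\leq B_d(K,\mathbb S,\varepsilon/2)$ already recorded in the excerpt.

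\medskip
\noindent\emph{Item (i).} Fix $\nu\in\mathcal M_1(X)$ with $\Pi(\sigma,\nu)_{erg}\neq\emptyset$ and $x\in\mathrm{supp}(\nu)$. Let $K$ be any compact neighbourhood of $x$. Since $x\in\mathrm{supp}(\nu)$, the neighbourhood $K$ has $\nu(K)>0$, so $K\in\mathcal B_X$ with positive measure, and Theorem \ref{thm3} gives $h_{top}(K,\mathbb S)\geq h_\nu(\mathbb S)$; equivalently, using the comparison inequality, $B_d(K,\mathbb S,\varepsilon)$ is bounded below by a quantity converging to $h_\nu(\mathbb S)$ as $\varepsilon\to0$. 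Taking the infimum over compact neighbourhoods $K$ of $x$ and then $\varepsilon\to0^+$ yields $h_{top}(x)\geq h_\nu(\mathbb S)$. Since $\nu$ was arbitrary in the indicated class, take the supremum.

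\medskip
\noindent\emph{Item (ii).} To show $h_d(\cdot,\varepsilon)$ is upper semicontinuous, fix $x$ and $\delta>0$ and pick a compact neighbourhood $K$ of $x$ with $B_d(K,\mathbb S,\varepsilon)<h_d(x,\varepsilon)+\delta$. For every $y$ in the interior of $K$, the set $K$ is itself a compact neighbourhood of $y$, so $h_d(y,\varepsilon)\leq B_d(K,\mathbb S,\varepsilon)<h_d(x,\varepsilon)+\delta$; this is exactly upper semicontinuity at $x$. Upper semicontinuous functions are Borel, and $h_{top}(x)=\lim_{\varepsilon\to0^+}h_d(x,\varepsilon)$ is a pointwise (monotone) limit of Borel functions, hence Borel measurable.

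\medskip
\noindent\emph{Item (iii).} This is the main obstacle and the point where I expect the genuine work to lie. The reverse-type inequality $\sup_{x\in K}h_{top}(x)\geq h_{top}(K,\mathbb S)$ is not formal: one must promote a global separated-set count on $K$ to a lower bound on the \emph{local} entropy at some point of $K$. I would argue by contradiction: suppose $\sup_{x\in K}h_{top}(x)=c<h_{top}(K,\mathbb S)$. Then for each $x\in K$ there is $\varepsilon_x>0$ and a compact neighbourhood $U_x$ of $x$ with $S_d(U_x,\mathbb S,\varepsilon_x)<c'$ for some fixed $c'\in(c,h_{top}(K,\mathbb S))$ (using the $B_d$–$S_d$ comparison to pass between the two quantities, shrinking $\varepsilon_x$ if needed). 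By compactness extract a finite subcover $U_{x_1},\dots,U_{x_N}$ of $K$ and set $\varepsilon=\min_j\varepsilon_{x_j}$. The key combinatorial step is a subadditivity/covering estimate: any $(\underline g,\varepsilon)$-separated subset of $K$ splits among the $U_{x_j}$, so
\begin{align*}
s(K,\underline g,\varepsilon)\leq\sum_{j=1}^N s(U_{x_j},\underline g,\varepsilon),
\end{align*}
and averaging over $\underline g\in G_n^*$ (dividing by $p^n$) and taking $\frac1n\log$ with $\limsup$ gives $S_d(K,\mathbb S,\varepsilon)\leq\max_j S_d(U_{x_j},\mathbb S,\varepsilon)\leq\max_j S_d(U_{x_j},\mathbb S,\varepsilon_{x_j})<c'$, since the $\limsup$ of an average of finitely many exponential rates is the maximal rate. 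Letting $\varepsilon\to0$ yields $h_{top}(K,\mathbb S)\leq c'<h_{top}(K,\mathbb S)$, a contradiction. The particular case $K=X$ combined with item (i) (or with $h_{top}(x)\leq h_{top}(X,\mathbb S)$ noted in the excerpt) gives the stated equality.

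\medskip
\noindent\emph{Item (iv).} Assume $G_1$ consists of homeomorphisms. First I would show $\mathbb S$-invariance of $h_{top}$: for each generator $g_i$ (a homeomorphism), $g_i$ maps compact neighbourhoods of $x$ to compact neighbourhoods of $g_i(x)$ bijectively, and since $d$ and $d\circ(g_i\times g_i)$ induce the same topology (equivalently $g_i$ is uniformly continuous with uniformly continuous inverse on the compact $X$), the $(\underline g,\varepsilon)$-spanning counts of $K$ and of $g_i(K)$ differ only by the modulus-of-continuity rescaling of $\varepsilon$, which disappears in the limit $\varepsilon\to0$; hence $h_{top}(g_i(x))=h_{top}(x)$, and since $h_{top}$ depends only on the topology (as recorded before Definition \ref{def:entropy-function}) this gives invariance under every $\underline g\in G$. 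For the integral inequality, fix $\nu$ with $\Pi(\sigma,\nu)_{erg}\neq\emptyset$; by item (i), $h_{top}(x)\geq h_\nu(\mathbb S)$ for every $x\in\mathrm{supp}(\nu)$, and since $\nu(\mathrm{supp}(\nu))=1$ we get $\int_X h_{top}\,d\nu\geq h_\nu(\mathbb S)$ immediately. (The invariance statement is what makes this integral bound natural and is the reason the homeomorphism hypothesis is imposed; the inequality itself then follows at once from item (i).)
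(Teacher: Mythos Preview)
Your arguments for items (i) and (ii) are essentially identical to the paper's.

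\medskip
\textbf{Item (iii): a quantifier slip.} As written your contradiction argument does not close. From $h_{top}(x)\le c$ you extract, for each $x\in K$, some $\varepsilon_x>0$ and a neighbourhood $U_x$ with $S_d(U_x,\mathbb S,\varepsilon_x)<c'$; after passing to a finite subcover you set $\varepsilon=\min_j\varepsilon_{x_j}$ and assert $S_d(U_{x_j},\mathbb S,\varepsilon)\le S_d(U_{x_j},\mathbb S,\varepsilon_{x_j})$. This inequality points the wrong way: $\varepsilon\mapsto S_d(U,\mathbb S,\varepsilon)$ is non-increasing, so shrinking $\varepsilon$ can only \emph{raise} the count. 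Moreover, even with the correct monotonicity you would only control $S_d(K,\mathbb S,\varepsilon)$ at one scale, not in the limit $\varepsilon\to0$. The easy repair is to reverse the order of choice: since $h_{top}(x)=\lim_{\varepsilon\to0}\inf_K S_d(K,\mathbb S,\varepsilon)$ is an increasing limit, $h_{top}(x)\le c<c'$ gives, for \emph{every} fixed $\varepsilon>0$, a neighbourhood $U_x^\varepsilon$ with $S_d(U_x^\varepsilon,\mathbb S,\varepsilon)<c'$. Fix $\varepsilon$ first, cover $K$ by finitely many such $U_{x_j}^\varepsilon$, and your splitting inequality $s(K,\underline g,\varepsilon)\le\sum_j s(U_{x_j}^\varepsilon,\underline g,\varepsilon)$ yields $S_d(K,\mathbb S,\varepsilon)<c'$; now let $\varepsilon\to0$.

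For comparison, the paper proves (iii) constructively rather than by contradiction: it fixes $\varepsilon$, repeatedly refines a finite cover of $K$ by closed balls of diameter $\le 1/n$, at each stage selecting a ball $B_{j_n}^n$ on which $B_d(\cdot,\mathbb S,\varepsilon)$ attains the value $B_d(K,\mathbb S,\varepsilon)$, and takes $x_0=\bigcap_n B_{j_n}^n\in K$. Both arguments rest on the same fact that the exponential growth rate of a finite union is the maximum of the rates.

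\medskip
\textbf{Item (iv): a genuinely different (and shorter) route.} Your deduction of the integral inequality is more direct than the paper's. The paper passes through the skew product: it proves a comparison $\sup_\omega h_{D\times d}((\omega,x))\le h_d(x)+\log p$, invokes \cite[Theorem~4.5]{YZ} for $\cF_G$ to obtain $\int h_{top}(\omega,x)\,d\mu\ge h_\mu(\cF_G)$ for $\mu\in\Pi(\sigma,\nu)_{erg}$, and then pushes forward by $\pi_X$. Your observation that item (i) already gives $h_{top}(x)\ge h_\nu(\mathbb S)$ for $\nu$-a.e.\ $x$ (indeed for every $x\in\operatorname{supp}\nu$) makes the inequality immediate and avoids the skew-product detour entirely. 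As for the $\mathbb S$-invariance of $h_{top}$, your sketch via uniform continuity readily yields $h_{top}(g_i(x))\le h_{top}(x)$ for each generator (compare $(\underline g,\varepsilon)$-spanning sets of $g_i(K)$ with $(\underline g\,g_i,\varepsilon)$-spanning sets of $K$); the reverse inequality needs a word more care, since in the semigroup setting only words of the form $\underline g\,g_i$ arise this way and one must check this costs nothing in the averaged limit. The paper, by contrast, simply cites \cite[Theorem~4.5]{YZ} for the skew product.
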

 Our last result shows that the topological entropy of a free semigroup action may be computed in terms 
 of countable sets.
 \begin{maintheorem}\label{thm5}
 Let $\mathbb S:G\times X\to X$ be a continuous finitely generated free semigroup
action. Then there exists a countable closed subset
$K \subset X$ such that $h_{top}(K,\mathbb S) = h_{top}(X,\mathbb S)$. Moreover, $K$ can be chosen such that the set of
limit points of $ K$ has at most one limit point, and $K$ has a unique limit point if and only if
there is $x \in X$ with $h_{top}(x) = h_{top}(X,\mathbb S)$.
 \end{maintheorem}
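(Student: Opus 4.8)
\textbf{Proof proposal for Theorem \ref{thm5}.}

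The plan is to mimic the construction from \cite{YZ} for a single dynamics, building the countable closed set $K$ as a nested union of finite $(\underline g,\varepsilon)$-separated sets whose cardinalities grow at the right exponential rate, together with the (finitely many) limit points forced on us. First I would fix, for each $k\ge 1$, a scale $\varepsilon_k\downarrow 0$ and, using Definition \ref{def:entropyB} and \eqref{eq:entropiaB}, choose $n_k$ large together with a collection of words $\underline g\in G^*_{n_k}$ and finite $(\underline g,\varepsilon_k)$-separated sets $E_{\underline g,k}\subset X$ witnessing that $\frac1{n_k}\log\left(\frac1{p^{n_k}}\sum_{\underline g\in G^*_{n_k}}\#E_{\underline g,k}\right)$ is within $1/k$ of $h_{top}(X,\mathbb S)$. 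The candidate countable set is the closure of $\bigcup_k\bigcup_{\underline g}E_{\underline g,k}$. To make the closure itself carry full entropy one runs the standard compactness/diagonal argument: pass to a subcollection so that each $E_{\underline g,k}$ accumulates at a single point (this uses compactness of $X$ and a pigeonhole over a finite cover by $\varepsilon_k/3$-balls), and arrange these accumulation points to themselves have a unique limit point $x_\infty$. Since $h_{top}(\cdot,\mathbb S)$ is monotone under inclusion of compact sets and the added limit points form a set of zero entropy, one gets $h_{top}(K,\mathbb S)\ge h_{top}(X,\mathbb S)$, while the reverse inequality is automatic from $K\subset X$.

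Next I would establish the ``at most one limit point of the set of limit points'' clause. The set $\Lambda$ of limit points of $K$ consists precisely of the accumulation points of the pieces $E_{\underline g,k}$ (one per piece, by the construction) together with possible accumulation points of those. By choosing, at stage $k$, all the per-piece accumulation points to lie in a ball of radius $1/k$ around a prescribed point $x_\infty$ — which is possible because we are free to translate each $E_{\underline g,k}$ along its own $(\underline g,\varepsilon_k)$-separated structure only up to the ambient geometry, so more honestly: because we may \emph{select} which separated sets to keep and discard those whose accumulation point is far from $x_\infty$, retaining enough of them that the entropy estimate survives (here one needs that discarding a subexponential proportion of words does not change the exponential growth rate). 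Then $\Lambda\subset\{x_\infty\}\cup\{\text{finitely many points per level}\}$ with the level points converging to $x_\infty$, so $\Lambda$ has $x_\infty$ as its only possible limit point.

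Finally, the equivalence: $K$ has a genuine (unique) limit point $x_\infty$ in the set-of-limit-points if and only if there is $x\in X$ with $h_{top}(x)=h_{top}(X,\mathbb S)$. For the ``if'' direction, suppose such an $x$ exists; then every compact neighbourhood of $x$ has full entropy, so at each stage $k$ one can choose all the witnessing separated sets $E_{\underline g,k}$ to lie inside $B(x,1/k)$ — this is exactly the statement $h_{top}(x)=\lim_{\varepsilon\to0}\inf\{S_d(K,\mathbb S,\varepsilon):K\ni x\text{ compact nbhd}\}$ from the displayed formula after Definition \ref{def:entropy-function} — forcing $x$ to be a limit point of $K$ and indeed of $\Lambda$. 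Conversely, if no point has full entropy, then for every $x$ there is a neighbourhood $U_x$ and $\delta_x>0$ with $h_{top}(U_x,\mathbb S)\le h_{top}(X,\mathbb S)-\delta_x$; by compactness finitely many such neighbourhoods cover $X$, and one shows that any countable closed full-entropy set must spread its entropy across these finitely many regions, so it cannot be supported near a single accumulation point — hence $\Lambda$ can be taken finite (no limit point). The main obstacle I expect is the bookkeeping in the second paragraph: ensuring simultaneously that (a) the retained separated sets still realize the full entropy exponential rate after the pigeonhole thinning, and (b) their accumulation points can be herded toward a single $x_\infty$; this is where the argument of \cite{YZ} has to be adapted carefully to the averaged quantity $S_n(K,\mathbb S,\varepsilon)=p^{-n}\sum_{\underline g}s(K,\underline g,\varepsilon)$, since thinning the \emph{words} $\underline g$ (as opposed to thinning points within a fixed word's separated set) interacts with the $p^{-n}$ normalization and one must check the discarded words are exponentially negligible in count.
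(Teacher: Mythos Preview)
Your overall strategy — build finite $(\underline g,\varepsilon)$-separated witnesses of the entropy and take their closure — is the right kind of construction, but the argument has a genuine gap in the second paragraph, and the paper's route avoids exactly this difficulty by organizing everything through the entropy function $h_{top}(\cdot)$.

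\medskip
\textbf{The gap.} Your ``herding'' step does not work as written. First, each $E_{\underline g,k}$ is a \emph{finite} set, so saying it ``accumulates at a single point'' is vacuous; the accumulation you need to control is that of the full union $\bigcup_k\bigcup_{\underline g}E_{\underline g,k}$, and your pigeonhole-over-a-cover argument only tells you that a positive proportion of points at level $k$ land in \emph{some} cell of a fixed finite cover — it does not produce a single point $x_\infty$ toward which the level-$k$ accumulation points can be made to converge as $k\to\infty$. Second, the thinning you propose (discard words whose separated sets miss a neighbourhood of $x_\infty$) is exactly where the averaged quantity $p^{-n}\sum_{\underline g}s(K,\underline g,\varepsilon)$ bites: there is no reason the discarded words are sub-exponentially few, and you acknowledge this as the main obstacle without resolving it. Finally, your ``only if'' direction is the wrong statement: the theorem asserts that \emph{some} countable closed full-entropy $K$ has a unique limit point iff a full-entropy point exists, not that every such $K$ does; so the compactness/finite-cover contradiction you sketch is aimed at the wrong target.

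\medskip
\textbf{How the paper fixes this.} The paper first isolates a clean lemma (its Proposition~\ref{prop4}): (i) if a countable closed $K$ has a \emph{unique} limit point $x_0$, then $h_{top}(x_0)\ge h_{top}(K,\mathbb S)$; and (ii) for every $x_0$ one can build such a $K$ with $h_{top}(K,\mathbb S)=h_{top}(x_0)$, by taking maximal $(\underline g,\varepsilon_m)$-separated sets \emph{inside} the balls $\overline B(x_0,1/n)$ and unioning over $m,n$. The point is that the separated sets are chosen \emph{locally} from the start, so no post hoc herding is needed. The theorem then follows immediately from Theorem~\ref{thm4}(iii): pick $x_n\to x_0$ with $h_{top}(x_n)\to h_{top}(X,\mathbb S)$, apply (ii) to each $x_n$ to get $K_n\subset B_{r_n}(x_n)$ with $r_n\to 0$, and set $K=\{x_0\}\cup\bigcup_n K_n$; its limit-point set is $\{x_0\}\cup\{x_n\}$, which has the single limit point $x_0$. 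The ``if and only if'' is then a direct consequence of Proposition~\ref{prop4}(i)--(ii) and requires no covering argument.

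In short, the missing idea is to route the construction through the entropy function: first locate points $x_n$ where $h_{top}(x_n)$ approaches $h_{top}(X,\mathbb S)$ (this is what replaces your unproven herding step), and only then build the separated witnesses inside shrinking neighbourhoods of those points.
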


\section{proofs}

Before we prove the main results we present some important observations on the characterization of the set of entropy points of a free semigroup action. 
\begin{proposition}\label{lemma:existence}
Let $\omega=\omega_1\omega_2\dots\in \Sigma_p^+$. Then
$h_{top}([\omega_1\dots\omega_\ell]\times K,\cF_G)= h_{top}(K,\mathbb S)+\log p$.
\end{proposition}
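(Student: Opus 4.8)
The plan is to establish the two inequalities
$h_{top}([\omega_1\dots\omega_\ell]\times K,\cF_G)\le h_{top}(K,\mathbb{S})+\log p$ and $h_{top}([\omega_1\dots\omega_\ell]\times K,\cF_G)\ge h_{top}(K,\mathbb{S})+\log p$ separately. Throughout I work with the product metric $D$ on $\Sigma_p^+\times X$ (the maximum of the usual metric on $\Sigma_p^+$ and of $d$ on $X$) and the associated Bowen (dynamical) metrics $D_n$ for $\cF_G$, and denote by $s(Z,\cF_G,n,\varepsilon)$ the maximal cardinality of an $(n,\varepsilon)$-separated subset of a compact set $Z$. The elementary fact that drives everything is the following: since $\cF_G^j(\omega,x)=(\sigma^j\omega,f^j_\omega(x))$ and $f^j_\omega$ depends only on $\omega_1,\dots,\omega_j$, whenever two sequences $\omega,\omega'$ agree on a long initial block the $X$-contributions to $D_n((\omega,x),(\omega',x'))$ reduce to $d_{\underline g}(x,x')$ for the word $\underline g$ read off that block, while the $\Sigma_p^+$-contribution depends only on $\omega,\omega'$; in particular, for a fixed $\omega$ one has $D_n((\omega,x),(\omega,x'))=d_{\underline g}(x,x')$ with $\underline g$ the word determined by the first $n$ symbols of $\omega$ (up to a harmless shift in length).

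For the lower bound I would fix $\varepsilon$ small and $n>\ell$, and for each word $w$ of length $n$ that begins with $\omega_1\dots\omega_\ell$ choose a $(\underline g_w,\varepsilon)$-separated set $E_w\subset K$ of maximal cardinality $s(K,\underline g_w,\varepsilon)$, together with an arbitrary extension $\bar w\in[\omega_1\dots\omega_\ell]$ of $w$. The set $F=\{(\bar w,x):|w|=n,\ w\text{ starts with }\omega_1\dots\omega_\ell,\ x\in E_w\}$ lies in $[\omega_1\dots\omega_\ell]\times K$ and is $(n,\varepsilon)$-separated for $\cF_G$: two points of $F$ with distinct words $w\ne w'$ are already separated in the $\Sigma_p^+$-coordinate within the first $n$ iterates (the words differ at some position $\le n$), while two points sharing the same word are separated in $X$ by the choice of $E_w$ and the fact recorded above. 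Hence $s([\omega_1\dots\omega_\ell]\times K,\cF_G,n,\varepsilon)\ge\#F=\sum_{w}s(K,\underline g_w,\varepsilon)$, the sum over words $w$ of length $n$ beginning with $\omega_1\dots\omega_\ell$. Dividing by $n$, letting $n\to\infty$ and then $\varepsilon\to0$, the lower bound reduces to showing that this constrained sum grows, up to a subexponential factor, like $p^{n}S_n(K,\mathbb{S},\varepsilon)$.

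For the upper bound I would take a maximal $(n,\varepsilon)$-separated subset $E$ of $[\omega_1\dots\omega_\ell]\times K$ and partition it according to the first $n-1+m(\varepsilon)$ symbols of the $\Sigma_p^+$-coordinate, where $m(\varepsilon)$ is chosen so that two sequences agreeing on their first $m(\varepsilon)$ symbols are $\varepsilon$-close. By the key fact, inside a given block the $X$-coordinates form a $(\underline g,\varepsilon)$-separated subset of $K$ for the word $\underline g\in G^*_{n-1}$ determined by the first $n-1$ symbols of the block; and for each such $\underline g$ there are at most $p^{m(\varepsilon)}$ blocks. This gives $\#E\le p^{m(\varepsilon)}\sum_{w}s(K,\underline g_w,\varepsilon)\le p^{m(\varepsilon)}\,p^{\,n-1}S_{n-1}(K,\mathbb{S},\varepsilon)$, the middle sum being over words of length $n-1$ beginning with $\omega_1\dots\omega_\ell$, and taking $\tfrac1n\log(\cdot)$, $\limsup_{n}$ and $\varepsilon\to0$ yields $h_{top}([\omega_1\dots\omega_\ell]\times K,\cF_G)\le h_{top}(K,\mathbb{S})+\log p$.

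The main obstacle is the final step of the lower bound: controlling the sum over words constrained to begin with $\omega_1\dots\omega_\ell$ from below by a subexponential multiple of the full Bufetov average $p^nS_n(K,\mathbb{S},\varepsilon)$. Writing $h_0=g_{\omega_\ell}\cdots g_{\omega_1}\in G^*_\ell$ one has $\underline g_{\,\omega_1\dots\omega_\ell v}=\underline g_v\,h_0$, so the constrained sum equals $\sum_{|v|=n-\ell}s(K,\underline g_v\,h_0,\varepsilon)$; using that $d_{\underline g_v h_0}$ dominates both $d$ and the pull-back $(x,y)\mapsto d_{\underline g_v}(h_0 x,h_0 y)$, one can lift $(\underline g_v,\varepsilon)$-separated sets of $h_0(K)$ to $K$ and get $s(K,\underline g_v h_0,\varepsilon)\ge s(h_0(K),\underline g_v,\varepsilon)$. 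This reduces the whole statement — and this is the delicate point one has to settle — to checking that composing on the right by the fixed element $h_0$ does not decrease the topological entropy of the semigroup action, i.e. $h_{top}(h_0(K),\mathbb{S})=h_{top}(K,\mathbb{S})$; the inequality $h_{top}(h_0(K),\mathbb{S})\le h_{top}(K,\mathbb{S})$ already falls out of the upper bound, so only the reverse one is at stake. An alternative, perhaps cleaner, way to organize the whole argument is to note that $\cF_G^{\,\ell}$ maps $[\omega_1\dots\omega_\ell]\times K$ onto $\Sigma_p^+\times h_0(K)$, invoke the identity $h_{top}(\cF_G,Z)=h_{top}(\cF_G,\cF_G(Z))$ for compact $Z$, and then apply the case $\ell=0$ of the proposition (in which no constrained sum appears); this again isolates $h_{top}(h_0(K),\mathbb{S})=h_{top}(K,\mathbb{S})$ as the crux.
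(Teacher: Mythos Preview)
Your approach mirrors the paper's: both directions are argued by building $(n,\varepsilon)$-separated (resp.\ spanning) subsets of $[\omega_1\dots\omega_\ell]\times K$ out of separated (resp.\ spanning) sets for the individual words, and both reduce the counting to a sum of $s(K,\underline g,\varepsilon)$ over words of a fixed length with the prescribed suffix $h_0=g_{\omega_\ell}\cdots g_{\omega_1}$. Your upper bound is essentially the paper's argument with tidier bookkeeping, and your lower-bound construction is exactly the paper's separated set.

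Where you differ is in being more careful at the end of the lower bound. After obtaining $s([\omega_1\dots\omega_\ell]\times K,\cF_G,n,\varepsilon)\ge\sum_{|\underline h|=n}s(K,\underline h\,h_0,\varepsilon)$, the paper simply \emph{asserts} the inequality $h_{top}([\omega_1\dots\omega_\ell]\times K,\cF_G)\geq h_{top}(K,\mathbb{S})+\log p$; you correctly notice that this constrained sum is not the Bufetov average and isolate the residual identity $h_{top}(h_0(K),\mathbb S)=h_{top}(K,\mathbb S)$ as the crux. This is a genuine gap---present in the paper's proof as well---and in fact it cannot be closed in general. Take $p=3$, let $g_1,g_2$ be expanding maps on a Cantor set $X$ with $h_{top}(X,\langle g_1,g_2\rangle)>0$, and let $g_3$ be a constant map; a direct computation gives $h_{top}(X,\mathbb S)>0$. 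For the cylinder $[3]$ one has $h_0=g_3$ and $h_0(X)$ a single point, so by $h_{top}(\cF_G,Z)=h_{top}(\cF_G,\cF_G(Z))$ and Bufetov's $\ell=0$ identity, $h_{top}([3]\times X,\cF_G)=h_{top}(\Sigma_3^+\times\{pt\},\cF_G)=\log 3<h_{top}(X,\mathbb S)+\log 3$. Thus only the inequality $h_{top}([\omega_1\dots\omega_\ell]\times K,\cF_G)\le h_{top}(K,\mathbb S)+\log p$ holds for arbitrary compact $K$ and arbitrary cylinder; equality needs extra hypotheses (e.g.\ $\ell=0$, or all generators surjective). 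The good news is that every subsequent use of the proposition in the paper invokes either the $\ell=0$ case or only the upper bound (for Corollary~\ref{lemma2} one combines $\le$ with the trivial $h_{top}(K,\mathbb S)\le h_{top}(X,\mathbb S)$), so the downstream results survive.
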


\begin{proof}
Let $n$ be a positive integer and consider  $N=p^n$. There are $N$ distinct words of
length $n$ in $\mathbb{F}_p^+$. Denote these words by $\theta^1,\dots,\theta^N$. Let
$(\theta(i))_{i=1}^N\subset \Sigma_p^+$ be a sequence such that $\theta(i)|_{[1,n]}=\theta^i$.
We notice that, for $0<\varepsilon<\frac{1}{2}$, the sequence $(\omega_1\dots\omega_\ell\theta(i))_{i=1}^N\subset [\omega_1\dots\omega_\ell]$
is $(n,\varepsilon,\sigma)$-separated subset of $[\omega_1\dots\omega_\ell]$.

Let $\theta^i=\theta_1^i\dots\theta_n^{i}$ and $\underline g^{(i)}=g_{\theta^i_n}\dots g_{\theta^i_1}g_{\omega_\ell}\dots g_{\omega_1}$. Set $Z_i=Z(\underline g^{(i)},\varepsilon,K)$, where $\underline g^{(i)}$ is the element in $G$ given by the c and let the points $x_1^i,\dots,x_{Z_i}^i$ form a $(\underline g^{(i)}, \varepsilon)$-separated subset
of $K$. Then the points
$$
(\omega_1\dots\omega_\ell\theta(i), x_j^i)\in [\omega_1\dots\omega_\ell]\times K,\ \ \ \ i=1,\dots,N \ \ \ j=1,\dots, Z_i,
$$
form a $(n,\varepsilon,\cF_G)$-separated subset of $[\omega_1\dots\omega_\ell]\times K$. It implies that
$$
S([\omega_1\dots\omega_\ell]\times K,\cF_G,n,\varepsilon)\geq \sum_{i=1}^{N}s(K,\underline g^{(i)},\varepsilon),
$$
and then $h_{top}([\omega_1\dots\omega_\ell]\times K,\cF_G)\geq h_{top}(K,\mathbb{S})+\log p$.

For the converse inequality, consider   $\varepsilon>0$ and take $C(\varepsilon)$ an arbitrary positive integer
such that $p^{-C(\varepsilon)}<\frac{\varepsilon}{100}$. We notice that there exist $N=p^{n+2C(\varepsilon)}$
distinct words of length $n+2C(\varepsilon)+\ell$. Denote these words by $\omega^1,\dots,\omega^N$.

Let $(\omega(i))_{i=1}^N\subset\Sigma_p^+$ be an arbitrary sequence satisfying
$\omega(i)|_{[1,n+C(\varepsilon)+\ell]}=\omega^i$. This sequence forms a $(n,\varepsilon,\sigma)$-spanning set
of $[\omega_1\dots\omega_\ell]$. Denote $\theta^i=\omega(i)|_{[0,n+\ell]}$, by $\underline g^{(i)}$ the concatenation associated to 
$\theta^i$ and $B_i=B(\underline g^{(i)},\varepsilon,K)$
and assume that the points $x_1^i,\dots,x_{B_i}^i$ form a $(\underline g^{(i)},\varepsilon)$-spanning subset of $K$. It follows that
the points $(\omega(i),x_j^i)$ with $i=1,\dots, N$ and $j=1,\dots, B_i$, form a $(\underline g^{(i)},n,\varepsilon)$-spanning set
of $\ K$. In that case, there exists a positive constant $T=T(\varepsilon)$ depending only
on $\varepsilon $ so that
$$
B([\omega_1\dots\omega_\ell]\times K ,\cF_G,n+\ell,\varepsilon)\leq \sum_{i=1}^{n+2C(\varepsilon)+\ell}b(K,\mathbb S,\underline g^{(i)},\varepsilon)
\leq T(\varepsilon)\sum_{|\underline g|=n+\ell}b(K,\mathbb S,\underline g,\varepsilon).
$$
From the last inequalities we obtain 
\begin{align*}
h_{top}([\omega_1\dots\omega_\ell]\times K,\cF_G)\\
&\leq h_{top}(K,\mathbb S)+\log p.
\end{align*}

It concludes the proof.

\end{proof}
As an immediate consequence of Proposition \ref{lemma:existence} we have the following.
\begin{corollary}
Under the above notations we have that
$E_p(\Sigma_p^+\times X, \cF_G)=\Sigma_p^+\times X$.
\end{corollary}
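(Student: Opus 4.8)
The plan is to combine Proposition~\ref{lemma:existence} with the elementary monotonicity of topological entropy under inclusion of compact sets, together with the fact that cylinders in $\Sigma_p^+$ are clopen. Recall that, since $\cF_G$ is a single continuous map, a point $(\omega,x)\in\Sigma_p^+\times X$ is an entropy point of $\cF_G$ precisely when $h_{top}(U,\cF_G)>0$ for every closed neighbourhood $U$ of $(\omega,x)$; so it suffices to bound $h_{top}(U,\cF_G)$ from below for an arbitrary such $U$.

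First I would fix $(\omega,x)\in\Sigma_p^+\times X$ with $\omega=\omega_1\omega_2\cdots$ and an arbitrary closed neighbourhood $U$ of $(\omega,x)$. Since the cylinders $[\omega_1\cdots\omega_\ell]$ form a neighbourhood basis of $\omega$ in $\Sigma_p^+$ and each of them is clopen, while the closed balls $\overline{B}_d(x,r)$ form a neighbourhood basis of $x$ in $X$, one may choose $\ell\in\mathbb N$ and $r>0$ so that the compact set
\[
K_0:=[\omega_1\cdots\omega_\ell]\times\overline{B}_d(x,r)
\]
is contained in $U$. By construction $K_0$ is a \emph{closed} neighbourhood of $(\omega,x)$ of exactly the product form covered by Proposition~\ref{lemma:existence}, with $K=\overline{B}_d(x,r)$ (a compact neighbourhood of $x$ in $X$).

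Next I would record the monotonicity statement: if $A\subset B$ are compact, then any $(n,\varepsilon,\cF_G)$-separated subset of $A$ is also a $(n,\varepsilon,\cF_G)$-separated subset of $B$, whence $S(A,\cF_G,n,\varepsilon)\le S(B,\cF_G,n,\varepsilon)$ for all $n,\varepsilon$, and therefore $h_{top}(A,\cF_G)\le h_{top}(B,\cF_G)$. Applying this to $K_0\subset U$ and then invoking Proposition~\ref{lemma:existence} yields
\[
h_{top}(U,\cF_G)\;\ge\;h_{top}(K_0,\cF_G)\;=\;h_{top}\bigl(\overline{B}_d(x,r),\mathbb{S}\bigr)+\log p\;\ge\;\log p\;>\;0,
\]
where the penultimate inequality uses $h_{top}(\overline{B}_d(x,r),\mathbb{S})\ge 0$ (separated sets are nonempty) and the last one uses $p\ge 2$. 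Since $U$ was an arbitrary closed neighbourhood of $(\omega,x)$, the point $(\omega,x)$ belongs to $E_p(\Sigma_p^+\times X,\cF_G)$; as $(\omega,x)$ was arbitrary, $E_p(\Sigma_p^+\times X,\cF_G)=\Sigma_p^+\times X$.

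The only step needing genuine care is the selection of the rectangular compact neighbourhood $K_0\subset U$: here one uses that $\Sigma_p^+$ admits a clopen cylinder basis, so that $K_0$ is truly a closed neighbourhood (not merely compact) and Proposition~\ref{lemma:existence} applies verbatim. Everything else is immediate from the definitions, so I expect no substantial obstacle beyond this bookkeeping.
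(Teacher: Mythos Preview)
Your proof is correct and follows essentially the same route as the paper: pick a rectangular closed neighbourhood $[\omega_1\cdots\omega_\ell]\times K$ inside the given closed neighbourhood, invoke Proposition~\ref{lemma:existence} to get entropy at least $\log p$, and use monotonicity. Your version is in fact slightly more careful, since you make explicit both the monotonicity step and the hypothesis $p\ge 2$ needed for $\log p>0$ (the paper's standing assumption allows $p\ge 1$, but for $p=1$ the corollary can fail).
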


\begin{proof}
Given $(\omega,x)\in\Sigma_p^+\times X$ and $F$ closed neighbourhood containing $(\omega,x)$, by the definition of the
 product topology of $\Sigma_p^+\times X$, there exist closed subsets  $[\omega_1\dots\omega_\ell]\subset \Sigma_p^+$ and $K\subset X$, so that
 $[\omega_1\dots\omega_\ell]\times K\subset F$. As $h_{top}(F,\cF)\geq h_{top}([\omega_1\dots\omega_\ell]\times K)$ and, by Proposition \ref{lemma:existence}, $h_{top}([\omega_1\dots\omega_\ell]\times K)\geq \log p$.

\end{proof}
If one denotes by $\pi_X$ the canonical projection of $\Sigma_p^+\times X$ on the first coordinate, the next corollary guarantees that
if $(\omega,x)\in E^f_p(\Sigma_p^+\times X, \cF_G)$ then  $\pi_X(\omega,x)\in E^f_p(X, \mathbb S)$.

\begin{corollary}\label{lemma2}
Let $(\omega,x)\in \Sigma_p^+\times X$ be a full entropy point for $\cF_G$. Then $x\in  X$ is  a full entropy point for $\mathbb S$.
\end{corollary}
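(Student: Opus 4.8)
The goal is to show that if $(\omega,x)$ is a full entropy point for $\cF_G$, then $x$ is a full entropy point for $\mathbb S$; i.e. every closed neighbourhood $K$ of $x$ in $X$ satisfies $h_{top}(K,\mathbb S)=h_{top}(X,\mathbb S)$. The natural strategy is to lift a given neighbourhood of $x$ to a neighbourhood of $(\omega,x)$ in $\Sigma_p^+\times X$ of product form, apply the hypothesis there, and push the resulting entropy identity back down through Proposition \ref{lemma:existence}. Concretely, I would start with an arbitrary closed neighbourhood $K\subset X$ of $x$; without loss of generality shrink it to a compact neighbourhood. Then for any cylinder $[\omega_1\dots\omega_\ell]$ the set $[\omega_1\dots\omega_\ell]\times K$ is a closed neighbourhood of $(\omega,x)$ in the product topology.

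Since $(\omega,x)\in E^f_p(\Sigma_p^+\times X,\cF_G)$, we get $h_{top}([\omega_1\dots\omega_\ell]\times K,\cF_G)=h_{top}(\Sigma_p^+\times X,\cF_G)$. Now apply Proposition \ref{lemma:existence} twice: once to the neighbourhood $K$ of $x$, giving $h_{top}([\omega_1\dots\omega_\ell]\times K,\cF_G)=h_{top}(K,\mathbb S)+\log p$, and once to the whole space (take $K=X$), giving $h_{top}(\Sigma_p^+\times X,\cF_G)=h_{top}(X,\mathbb S)+\log p$. Combining the three equalities and cancelling $\log p$ yields $h_{top}(K,\mathbb S)=h_{top}(X,\mathbb S)$. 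Since $K$ was an arbitrary compact neighbourhood of $x$, this shows $x\in E^f_p(X,\mathbb S)$, as desired.

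There is a small technical point to address: Proposition \ref{lemma:existence} is stated for products of a single cylinder $[\omega_1\dots\omega_\ell]$ with a compact set $K$, and one must observe that $\Sigma_p^+$ itself equals the cylinder of length $0$ (the empty word), or alternatively that $\Sigma_p^+$ is a finite union of cylinders $[\omega_1]$, $\omega_1\in\{1,\dots,p\}$, so that $h_{top}(\Sigma_p^+\times X,\cF_G)=\max_{\omega_1}h_{top}([\omega_1]\times X,\cF_G)=h_{top}(X,\mathbb S)+\log p$ by monotonicity and finite stability of topological entropy under finite unions. This bookkeeping is the only obstacle, and it is routine; the heart of the argument is simply that Proposition \ref{lemma:existence} makes the entropy of $\cF_G$ on product neighbourhoods a rigid affine function of the entropy of $\mathbb S$ on the $X$-factor, so "full entropy" transfers verbatim from the skew product to the base.
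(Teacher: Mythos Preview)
Your proposal is correct and follows essentially the same approach as the paper: lift an arbitrary closed neighbourhood $K$ of $x$ to a product neighbourhood $[\omega_1\dots\omega_\ell]\times K$ of $(\omega,x)$, invoke the full-entropy-point hypothesis there, and use Proposition~\ref{lemma:existence} on both sides to cancel the $\log p$. The paper's proof is terser and simply writes the chain of equalities in one line, taking for granted the identity $h_{top}(\Sigma_p^+\times X,\cF_G)=h_{top}(X,\mathbb S)+\log p$ that you carefully justify.
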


\begin{proof}
Assume that $\omega=\omega_1\omega_2\dots$ and consider $[\omega_1\dots\omega_\ell]\times K$ compact
neighbourhood of $(\omega,x)$. By Proposition \ref{lemma:existence}
$$
h_{top}(X, \mathbb S)+\log p=h_{top}(\Sigma^+_p\times X,\cF_G)=
h_{top}([\omega_1\dots\omega_\ell]\times K,\cF_G)= h_{top}(K,\mathbb S)+\log p.
$$
\end{proof}

Another important consequence of Proposition \ref{lemma:existence} is the following,
\begin{corollary}\label{prop:1}
 Let $\mathbb{S}:G\times X\to X$ be a finitely generated free semigroup action. Then
  $E^f_p(X,\mathbb{S})\not=\emptyset$.
\end{corollary}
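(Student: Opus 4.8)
The plan is to derive Corollary~\ref{prop:1} directly from Theorem~\ref{thm1} (which asserts $E^f_p(\Sigma_p^+\times X,\cF_G)\neq\emptyset$, restated here as the non-emptiness of full entropy points for the skew-product) together with Corollary~\ref{lemma2}. The logical chain is short: first invoke the existence of a full entropy point for the skew-product $\cF_G$; then push it down to $X$ via the projection $\pi_X$; then quote Corollary~\ref{lemma2} to conclude that its image is a full entropy point for $\mathbb{S}$.

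Concretely, I would proceed as follows. By Theorem~\ref{thm1}, there exists $(\omega,x)\in\Sigma_p^+\times X$ with $(\omega,x)\in E^f_p(\Sigma_p^+\times X,\cF_G)$; that is, for every closed neighbourhood $F$ of $(\omega,x)$ one has $h_{top}(F,\cF_G)=h_{top}(\Sigma_p^+\times X,\cF_G)$. Applying Corollary~\ref{lemma2} to this point yields $x\in E^f_p(X,\mathbb{S})$, and in particular $E^f_p(X,\mathbb{S})\neq\emptyset$. The only subtlety is making sure Theorem~\ref{thm1}, as cited from \cite{RoVa1}, is indeed a statement about $\cF_G$ and not already about $\mathbb{S}$ — if it is stated for $\cF_G$, the argument is exactly the two-line deduction above; if \cite{RoVa1} already gives it for $\mathbb{S}$, then Corollary~\ref{prop:1} is a verbatim restatement and the "proof" is just a pointer.

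Since the surrounding text emphasizes Proposition~\ref{lemma:existence} as the key tool, an alternative self-contained route avoids \cite{RoVa1} entirely: take any point $x_0\in X$ at which the entropy function $h_{top}$ attains its maximum value $h_{top}(X,\mathbb{S})$ — such a point exists by the upper semicontinuity of $h_d(\cdot,\varepsilon)$ (Theorem~\ref{thm4}(ii)) combined with compactness of $X$, or directly from Theorem~\ref{thm4}(iii), $\sup_{x\in X}h_{top}(x)=h_{top}(X,\mathbb{S})$, and upper semicontinuity guaranteeing the supremum is attained. For such $x_0$, every compact neighbourhood $K$ of $x_0$ satisfies $h_{top}(K,\mathbb{S})\geq h_{top}(x_0)=h_{top}(X,\mathbb{S})$, and the reverse inequality is monotonicity, so $h_{top}(K,\mathbb{S})=h_{top}(X,\mathbb{S})$, i.e. $x_0\in E^f_p(X,\mathbb{S})$.

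I expect no genuine obstacle here: this is a corollary in the literal sense, and the main thing to get right is the bookkeeping of which space each cited result lives on. The most delicate point, if one takes the self-contained route, is justifying that the maximum of the entropy function is attained; but since Theorem~\ref{thm4}(ii) gives upper semicontinuity of $h_d(\cdot,\varepsilon)$ for each fixed $\varepsilon$ and the decreasing limit of upper semicontinuous functions is upper semicontinuous, $h_{top}=\lim_{\varepsilon\to0}h_d(\cdot,\varepsilon)$ is upper semicontinuous on the compact space $X$ and hence attains its supremum. Either way, the proof is essentially a citation plus Corollary~\ref{lemma2}, so I would keep it to two or three lines.
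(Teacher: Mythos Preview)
Your primary route---take a full entropy point for the skew-product $\cF_G$ and project it to $X$ via Corollary~\ref{lemma2}---is exactly what the paper intends: the corollary is announced as ``another important consequence of Proposition~\ref{lemma:existence}'', and the identical argument (existence of a full entropy point for the single map $\cF_G$, then Corollary~\ref{lemma2}) is spelled out in the proof of Proposition~\ref{proposition-closed-set}(ii). One clarification: in this paper Theorem~\ref{thm1} is already stated for $\mathbb{S}$, not for $\cF_G$, so you should not cite it for the skew-product step; the correct input is the single-dynamics result from \cite{YZ} (or \cite{RoVa1}) applied to $\cF_G$.

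Your alternative ``self-contained'' route, however, has a genuine gap. You claim $h_{top}=\lim_{\varepsilon\to 0}h_d(\cdot,\varepsilon)$ is upper semicontinuous because it is a \emph{decreasing} limit of u.s.c.\ functions; but the paper states (and it is easy to check) that $h_d(x,\varepsilon)$ \emph{increases} as $\varepsilon\downarrow 0$. An increasing limit (supremum) of u.s.c.\ functions need not be u.s.c.---this is precisely why Theorem~\ref{thm4}(ii) only concludes that $h_{top}$ is Borel measurable, not u.s.c. Consequently you cannot conclude that $\sup_{x\in X}h_{top}(x)$ is attained, and this route does not close. (Indeed, in Theorem~\ref{thm5} the paper explicitly allows for the possibility that no $x$ satisfies $h_{top}(x)=h_{top}(X,\mathbb S)$.) Stick with the first argument.
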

  


The fact of the semigroup action having positive entropy when restricted to a subset of $X$ is related presence of entropy points in such subset, as
the next shows.
\begin{proposition}\label{proposition-closed-set}
Let $\mathbb{S}:G\times X\to X$ be a finitely generated free semigroup action and $K$ be a closed subset of $X$. 
  \begin{enumerate}
      \item[i.] If $h_{top}(K,\mathbb{S})>0$, then $K\cap E_p(X,\mathbb{S})\not=\emptyset$;
      \item[ii.]  If  $h_{top}(K,\mathbb{S})=h_{top}(X,\mathbb{S})$, then $K\cap E^f_p(X,\mathbb{S})\not=\emptyset$.
      \end{enumerate}
\end{proposition}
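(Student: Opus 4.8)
The plan is to argue by contradiction in both items, using the compactness of $K$ together with two elementary properties of the set function $K\mapsto h_{top}(K,\mathbb S)$. The first is \emph{monotonicity}: if $A\subset B$ are compact subsets of $X$ then $h_{top}(A,\mathbb S)\le h_{top}(B,\mathbb S)$, which is immediate from Definition \ref{def:entropyB} since every $(\underline g,\varepsilon)$-separated subset of $A$ is a $(\underline g,\varepsilon)$-separated subset of $B$. The second, which is the technical core, is \emph{finite stability}: if $K$ is compact and $K\subset \bigcup_{i=1}^m A_i$ with each $A_i$ compact, then $h_{top}(K,\mathbb S)\le \max_{1\le i\le m} h_{top}(A_i,\mathbb S)$.

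First I would establish finite stability. Fix $\varepsilon>0$ and $\underline g\in G_n^*$. If $E\subset K$ is $(\underline g,\varepsilon)$-separated, then each $E\cap A_i$ is a $(\underline g,\varepsilon)$-separated subset of $A_i$, and since $E=\bigcup_{i=1}^m (E\cap A_i)$ we get $s(K,\underline g,\varepsilon)\le \sum_{i=1}^m s(A_i,\underline g,\varepsilon)$. Dividing by $p^n$ and summing over $\underline g\in G_n^*$ yields $S_n(K,\mathbb S,\varepsilon)\le \sum_{i=1}^m S_n(A_i,\mathbb S,\varepsilon)\le m\max_{1\le i\le m} S_n(A_i,\mathbb S,\varepsilon)$, hence $\frac1n\log S_n(K,\mathbb S,\varepsilon)\le \frac{\log m}{n}+\max_{i}\frac1n\log S_n(A_i,\mathbb S,\varepsilon)$. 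Taking $\limsup_{n\to\infty}$ (the term $\tfrac{\log m}{n}$ vanishes, and since the index set is finite the $\limsup$ of a maximum equals the maximum of the $\limsup$'s) and then letting $\varepsilon\to 0$ (again the limit of a finite maximum is the maximum of the limits) gives $h_{top}(K,\mathbb S)\le \max_{1\le i\le m} h_{top}(A_i,\mathbb S)$. Note that all sets occurring below are closed in the compact space $X$, hence compact, so Definition \ref{def:entropyB} applies throughout.

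Now for item (i), suppose toward a contradiction that $K\cap E_p(X,\mathbb S)=\emptyset$. For each $x\in K$, since $x$ is not an entropy point, there is a closed neighbourhood $U_x$ of $x$ with $h_{top}(U_x,\mathbb S)=0$. The interiors $\{\operatorname{int}U_x\}_{x\in K}$ form an open cover of $K$; by compactness there are $x_1,\dots,x_m$ with $K\subset \bigcup_{i=1}^m U_{x_i}$. Finite stability then gives $h_{top}(K,\mathbb S)\le \max_{1\le i\le m} h_{top}(U_{x_i},\mathbb S)=0$, contradicting $h_{top}(K,\mathbb S)>0$. For item (ii), suppose $K\cap E^f_p(X,\mathbb S)=\emptyset$. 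For each $x\in K$, since $x$ is not a full entropy point and $h_{top}(U,\mathbb S)\le h_{top}(X,\mathbb S)$ always, there is a closed neighbourhood $U_x$ of $x$ with $h_{top}(U_x,\mathbb S)<h_{top}(X,\mathbb S)$. Extracting a finite subcover of $K$ from $\{\operatorname{int}U_x\}_{x\in K}$ as before and applying finite stability, $h_{top}(K,\mathbb S)\le \max_{1\le i\le m} h_{top}(U_{x_i},\mathbb S)<h_{top}(X,\mathbb S)$, because a maximum of finitely many numbers each strictly below $h_{top}(X,\mathbb S)$ is itself strictly below $h_{top}(X,\mathbb S)$; this contradicts $h_{top}(K,\mathbb S)=h_{top}(X,\mathbb S)$.

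\textbf{Main obstacle.} There is no deep difficulty: the argument is a compactness reduction once finite stability of $h_{top}(\cdot,\mathbb S)$ is in hand. The only points that require genuine care are (a) the passage from the finite estimate $S_n(K,\mathbb S,\varepsilon)\le \sum_i S_n(A_i,\mathbb S,\varepsilon)$ to the entropy inequality, where one must check that the successive $\limsup_{n\to\infty}$ and $\lim_{\varepsilon\to0}$ commute with the finite maximum, and (b) in item (ii), recording that "$x$ is not a full entropy point" yields a \emph{strict} inequality and that this strictness survives the finite maximum.
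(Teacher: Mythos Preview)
Your argument is correct. Finite stability of $K\mapsto h_{top}(K,\mathbb S)$ is established cleanly (the pigeonhole step showing $\limsup_n\max_i a_n^{(i)}\le \max_i\limsup_n a_n^{(i)}$ for a finite index set is the only point worth a remark, and it holds), and the two contradiction arguments go through, including the edge case $h_{top}(X,\mathbb S)=0$, where the hypothesis $K\cap E_p^f(X,\mathbb S)=\emptyset$ is vacuously impossible.

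The route, however, differs from the paper's. For item (i) the paper also relies on finite stability, but uses it \emph{constructively}: it builds a nested sequence of closed balls $B^n_{j_n}$ of diameter $\le 1/n$, each carrying the full entropy $h_{top}(K,\mathbb S)$, and takes the intersection point; your version extracts a single finite cover and reaches a contradiction in one stroke. The two are essentially dual. For item (ii) the approaches diverge more substantially: the paper passes to the skew product $\mathcal F_G$ on $\Sigma_p^+\times X$, invokes Proposition~\ref{lemma:existence} to transfer the equality $h_{top}(K,\mathbb S)=h_{top}(X,\mathbb S)$ upstairs, applies the single-map result \cite[Theorem~3.5]{YZ} to obtain a full entropy point for $\mathcal F_G$ in $\Sigma_p^+\times K$, and then projects back via Corollary~\ref{lemma2}. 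Your proof bypasses the skew product entirely and reuses the elementary covering argument from (i). What your approach buys is self-containment and uniformity of method; what the paper's approach buys is an explicit link to the skew-product machinery and to the classical theory in \cite{YZ}, consistent with the paper's broader theme.
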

\begin{proof}
We start  proving (i). Cover $K$ by finitely many closed balls $B_1^{1},\dots,B_{\ell_1}^1$ with diameter at most 1. We know that
$h_{top}(K,\mathbb S)=\max_{j}h_{top}(B_{i_j},\mathbb S)$ and it implies the existence of $j_1$ so that $h_{top}(K,\mathbb S)=h_{top}(B_{j_1},\mathbb S)$. Cover $B_{j_1}$ by by finitely many closed balls $B_1^{2},\dots,B_{\ell_2}^2$ with diameter at most $\frac12$. By the same reasoning, there exists $j_2$ for which $h_{top}(K,\mathbb S)=h_{top}(B^1_{j_1},\mathbb S)=h_{top}(B^2_{j_2},\mathbb S)$. By induction, for
each $n \geq 2$, there is a closed ball $B^n_{j_n}\subset B^n_{j_{n-1}}$ with diameter at most $\frac1n $
such that $h_{top}(K,\mathbb S)=h_{top}(B^n_{j_n},\mathbb S)$. Set $x$ the unique point in the intersection of the closed balls $B^n_{j_n}$. By definition, $x$ is an entropy point for $\mathbb S$.

To prove (ii) we notice that, since $h_{top}(K,\mathbb{S})=h_{top}(X,\mathbb{S})$, then 
$h_{top}(\Sigma_p^+\times K,\cF_G)=h_{top}(\Sigma_p^+\times X,\cF_G)$. By \cite[Theorem 3.5]{YZ}  there exists $(\omega,x)\in \Sigma_p^+\times K\cap E^f_p(\Sigma_p^+\times X,\cF_G)$. By Corollary \ref{lemma2} we have that $\pi_X(\omega,x)\in K\cap  E^f_p(X,\mathbb{S})$, and it proves (ii).
\end{proof}

\subsection{Proof of Theorem \ref{thm2}}
Before we start the proof we recall the definition of Katok's entropy of a probability measure in $X$, which was extended 
for the semigroup setting in \cite{CRVIII}.
\subsubsection{Katok's entropy}

\begin{definition}\label{def:metric-entropy-5}
Given  a Borel probability measure $\nu$ on $X$, $\delta\in (0,1)$ and $\varepsilon>0$, define
\begin{equation}
h^{K}_{\nu}(\mathbb{S},\varepsilon,\delta) 
	= \limsup_{n\to\infty} \frac1n \log S_\nu(X,\mathbb S,n, \varepsilon,\delta),
\end{equation}
where
\begin{equation}\label{eq:Zn}
S_\nu(X,\mathbb S, n, \varepsilon,\delta)
	=\frac{1}{p^n}\sum_{{\underline g \in G_n^*}} s_\nu(\underline g,\varepsilon,\delta),
\end{equation}
and for $\underline g\in G_n^*$
$$s_\nu(\underline g, \varepsilon,\delta) = \,\inf_{\{E \,\subseteq \,X\,\colon\, \nu(E) \,> \,1-\delta\}}\,\, s(E,\underline g,  \varepsilon)$$
and $s(\underline g,  \varepsilon,E)$ denotes the maximal cardinality of the $(\underline g,  \varepsilon)-$separated subsets of $E$.
\end{definition}
 The \emph{entropy of the semigroup action $\mathbb{S}$ with respect to $\nu$} is defined by
\begin{equation}
h^{K}_{\nu}(\mathbb{S}) = \lim_{\delta \to 0} \,\,\lim_{\varepsilon \to 0}\,\, h^{K}_{\nu}(\mathbb{S},\varepsilon,\delta) 
\end{equation}

Observe that the previous limit is well defined due to the monotonicity of the function
$$(\varepsilon,\delta) \mapsto \frac1n \log S_\nu(X,\mathbb S, n, \varepsilon,\delta)$$
on the unknowns $\varepsilon$ and $\delta$. Moreover, if the set of generators is $G_1=\{Id,f\}$, we recover the notion proposed by Katok for a single dynamics $f$.

By \cite[Theorem C]{CRVIII} we have that 
$$
h_{top}(X,\mathbb S)=\sup_{\nu\in\mathcal M_1(X)}h^K_\nu(\mathbb S).
$$

Let us proceed to the proof of Theorem \ref{thm2}.
We recall that for $\nu\in\mathcal{M}_1(X)$, we denote the set of ergodic measures in $\Pi(\sigma,\nu)$ by $\Pi(\sigma,\nu)_{erg}$.

 Since $h_\nu(\mathbb S)>0$, by the definition of the metric entropy, we have that there exists $\mu\in\Pi(\sigma,\nu)_{erg}$ 
 for which $h_\mu(\cF_G)>\log p$. If $x\in \text{supp}(\nu)$, for any closed neighbourhood  $N_x$ of $x$, $\nu(N_x)=\mu(\Sigma_p^+\times  N_x)>0$. By \cite[Theorem 3.7]{YZ} we have that
 \begin{align*}
     h_{top}(N_x,\mathbb S)+\log p=h_{top}(\Sigma_p^+\times  N_x,\cF_G)\geq h_\mu(\cF_G)\log p,
 \end{align*}
which implies that $h_{top}(N_x,\mathbb S)>0$ and then, $x\in E_p(X,\mathbb S)$, concluding the proof item (i).

 For (ii)  we notice that
\begin{align*}
    h_{top}(X,\mathbb S)=\sup_{\{\nu\in\mathcal M_1(X):\Pi(\sigma,\nu)_{erg}\not=\emptyset\}}h_\nu(\mathbb S).
\end{align*}
By (i) we have that the suppremum is taken on the probability measures on $ X$ with support contained in $ E_p(X,\mathbb S)$. 
By \cite[Theorem C]{CRVIII} we have that  
$$
h_\nu(\mathbb S)\leq h_\nu^K(\mathbb S),
$$
and it implies that $h_{top}(X,\mathbb S)=h_{top}(E_p(X,\mathbb S),\mathbb S)$, since $\text{supp}(\nu)\subset E_p(X,\mathbb S)$.


\subsection{Proof of Theorem \ref{thm3}}
Let $0<\delta<1$ fixed. Let $\varepsilon>0$ and $K\in\mathcal B_X$ satisfy $\nu(K)>0$. As $\Pi(\sigma,\nu)_{erg}\not=\emptyset$, for $\mu\in\Pi(\sigma,\nu)_{erg}$, by the ergodicity of $\mu$, there exists $m(K)\in \mathbb N$  such that $\mu\left(\bigcup_{i=0}^{m(K)}\cF_G^i(\Sigma_p^+\times K)\right)\geq 1-\delta$. 

\textbf{Claim.} If $K^{(m)}:=\pi_X\left(\bigcup_{i=0}^{m(K)}\cF_G^i(\Sigma_p^+\times K)\right)$, then 
$S_d(K,\mathbb S,\varepsilon)=S_d(K^{(m)},\mathbb S,\varepsilon)$.

\noindent \textit{Proof of the Claim.} First of all we notice that $\nu(K^{(m)})\geq 1-\delta$. Fix $n\in \mathbb N$ and take $\underline g\in G_n^*$. Let $E_{\underline g}\subset K^{(m)}$ be a $(\underline g,\varepsilon)$-separated set.  In particular there exists $0\leq i_{0}(n)\leq m$
so that 
\begin{align}\label{eq:sep-cote}
    \left|E_{\underline g}\cap \pi_X(\cF_G(\Sigma_p^+\times K))\right|\geq\frac{|E_{\underline{g}}|}{m+1}.
\end{align}
If $\underline g=g_{j_n}\dots g_{j_1}$, we have that $\underline g_{i_0(n)}^{-1}(E)\cap  K$ is a $(\underline{g},\varepsilon)$-separated set in $K$. Taking $\tilde{\underline{g}}=\underline{g}\;\underline{ g}_{i_0(n)}$, we have that $ \underline{ g}_{i_0(n)}^{-1}(E)$ is $(\tilde{\underline{g}},\varepsilon)$-separated, with $\tilde{\underline{g}}\in G_{n+m}$. By \eqref{eq:sep-cote} 
\begin{align*}
    s(K,\tilde{\underline{g}},\varepsilon)\geq \frac{1}{m+1}s(K^{(m)},\underline g,\varepsilon)\geq\frac{1}{m+1} s( K,\underline g,\varepsilon).
\end{align*}
Denote by $\tilde G_{m+n}^*$ the subset of $\tilde{\underline{g}}$ obtained in the previous construction. It shows that 
\begin{align*}
    S_{m+n}(K,\mathbb S,\varepsilon)
    &=\frac{1}{p^{m+n}}\sum_{\underline g\in G_{m+n}^*}s(K,\underline{g},\varepsilon)\\
    &\geq\frac{1}{p^{m+n}} \frac{1}{m+1}\sum_{\tilde{\underline{g}}\in \tilde G_{m+n}^*}s(K,\tilde{\underline{g}},\varepsilon)\\
    &\geq \frac{1}{p^{m+n}}\frac{1}{m+1}\sum_{\underline g\in G_n^*}s(K^{(m)},\underline g,\varepsilon)\\
    &\geq \frac{1}{p^{m+n}}\frac{1}{m+1}\sum_{\underline g\in G_n^*}s(K,\underline g,\varepsilon),
\end{align*}
and it implies
$$S_{m+n}(K,\mathbb S,\varepsilon)\geq \frac{1}{p^{m}} \frac{1}{m+1} S_n(K^{(m)},\mathbb S,\varepsilon)\geq\frac{1}{p^{m}} \frac{1}{m+1} S_n(K,\mathbb S,\varepsilon),$$
which guarantees
$S(K,\mathbb S,\varepsilon)=S_d(K^{(m)},\mathbb S,\varepsilon)$, and this proves the claim.
Let us proceed to finalize the proof of the theorem. As, for any $\varepsilon>0$, $n\in\mathbb N$ and  $R\subset X$ 
$$
B_n(R,\mathbb S,\varepsilon)\leq S_n(R,\mathbb S,\varepsilon),
$$
 we have 
\begin{align*}
    &\inf\{S_d(K,\mathbb S,\varepsilon):K\in \mathcal B_X \text{ with }\nu(K)>0\}\\
    &=\inf\{S_d(K^{(m)},\mathbb S,\varepsilon):K\in \mathcal B_X \text{ with }\nu(K)>0\}\\
    &\geq \inf\left\{\limsup_{n\to\infty}\frac{1}{n}\log\frac{1}{p^n}B(K^{(m)},\mathbb S,\varepsilon):K\in \mathcal B_X \text{ with }\nu(K)>0\right\}\\
    &\geq \limsup_{n\to\infty}\frac{1}{n}\inf\left\{\log\frac{1}{p^n}B_n(K^{(m)},\mathbb S,\varepsilon):K\in \mathcal B_X \text{ with }\nu(K)>0\right\}\\
    &\geq \limsup_{n\to\infty}\frac{1}{n}\log\frac{1}{p^n}B_\nu(\mathbb S,n,\varepsilon).
\end{align*}
Letting $\varepsilon\to0$ and noticing that 

$$
\lim_{\varepsilon\to0}\limsup_{n\to\infty}\frac{1}{n}\log\frac{1}{p^n}B_\nu(\mathbb S,n,\varepsilon)\geq h_{\nu}(\mathbb S),
$$
we conclude the proof.
\subsection{Proof of Theorem \ref{thm4}}
%

%
We notice that (i) is an immediate consequence of Theorem \ref{thm3}.

For (ii), let $\varepsilon>0$. For $r\in\mathbb R$, if $h_{top}(x_0,\varepsilon)<r$, then $B(K,\mathbb S,\varepsilon)<r$, for some closed 
neighbourhood $K$ of $x_0$. In particular, $h_{top}(x,\varepsilon)<r$ for each $x\in K$. So, $h_{top}(\cdot,\varepsilon)$
is upper semi continuous. As we can obtain $h$  as the limit of $\{h_{top}(x,\frac1n)\}_{n\in\mathbb N}$, sequence of upper semi continuous maps, it is a Borel map.

(iii). Cover $K$ by finitely many closed balls $B_1^{1},\dots,B_{\ell_1}^1$ with diameter at most 1. We know that
$h_{top}(K,\mathbb S)=\max_{j}h_{top}(B_{i_j},\mathbb S)$ and it implies the existence of $j_1$ so that $B_d(K,\mathbb S,\varepsilon)=B_d(B_{j_1}\cap K,\mathbb S,\varepsilon)$. Cover $B_{j_1}$  by finitely many closed balls $B_1^{2},\dots,B_{\ell_2}^2$ with diameter at most $\frac12$. By the same reasoning, there exists $j_2$ for which $B_d(K,\mathbb S,\varepsilon)=B_d(B_{j_2}\cap K,\mathbb S,\varepsilon)$. By induction, for
each $n \geq 2$, there is a closed ball $B^n_{j_n}\subset B^n_{j_{n-1}}$ with diameter at most $\frac1n $
such that $B_d(K,\mathbb S,\varepsilon)=B_d(B_{j_n}\cap K,\mathbb S,\varepsilon)$. Set $x_0$ the unique point in the intersection of the closed balls $B^n_{j_n}$, which belongs to $K$, since $K$ is closed. If $K'$ is any closed neighbourhood of $x_0$, for $n\in \mathbb N$ sufficiently large, $B_{j_n}\cap K\subset K'$, which implies
$$
B_d(K',\mathbb S,\varepsilon)\geq B_d(B_{j_n}\cap K,\mathbb S,\varepsilon)=B_d(B_{j_n}\cap K,\mathbb S,\varepsilon)
$$
and 
$$
h_{top}(x_0)\geq h_d(x_0,\varepsilon)\geq  B_d(B_{j_n}\cap K,\mathbb S,\varepsilon).
$$
Hence, letting $\varepsilon\to0^+$,
$$
\sup_{x\in K}h_{top}(x)\geq h_{top}(K,\mathbb S).
$$
For the last part of Theorem \ref{thm4} we need of the following proposition, which is a consequence the definition of the entropy function of $\mathbb S$ and $\cF_G$.

\begin{proposition}\label{prop3}
 Let $\mathbb{S}:G\times X\to X$ be a finitely generated free semigroup action. For $x\in X$,
 $$
 \sup_{\omega\in\Sigma_p^+}h_{D\times d}((\omega,x),)\leq h_d(x)+\log p.
 $$
\end{proposition}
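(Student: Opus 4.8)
The plan is to relate the entropy function of the skew-product $\cF_G$ at a point $(\omega,x)$ with the entropy function of $\mathbb S$ at $x$, via a spanning-set estimate analogous to the one used in Proposition \ref{lemma:existence}. First I would recall that, by definition, $h_{D\times d}((\omega,x),\varepsilon)$ is the infimum of $B_{D\times d}(F,\cF_G,\varepsilon)$ over all closed neighbourhoods $F$ of $(\omega,x)$; by the structure of the product topology it suffices to take $F$ of the form $[\omega_1\dots\omega_\ell]\times K$ with $K$ a closed neighbourhood of $x$, and in fact one may let $\ell\to\infty$ and $K$ shrink to $x$ independently. So the task reduces to showing
$$
B_{D\times d}([\omega_1\dots\omega_\ell]\times K,\cF_G,\varepsilon)\leq B_d(K,\mathbb S,\varepsilon)+\log p
$$
for every closed neighbourhood $K$ of $x$, every $\ell$, and every $\varepsilon>0$, and then taking infima over $K$ and $\ell$ and the limit $\varepsilon\to 0^+$.

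For that inequality I would reuse the ``converse'' half of the proof of Proposition \ref{lemma:existence}: choose $C(\varepsilon)$ with $p^{-C(\varepsilon)}<\varepsilon/100$, cover the cylinder $[\omega_1\dots\omega_\ell]$ by the $p^{n+C(\varepsilon)}$ sub-cylinders of length $n+C(\varepsilon)+\ell$ to get an $(n,\varepsilon,\sigma)$-spanning set of representatives $\omega(1),\dots,\omega(N)$ with $N=p^{n+C(\varepsilon)}$, and for each representative take a $(\underline g^{(i)},\varepsilon)$-spanning subset of $K$ of minimal cardinality $b_d(K,\underline g^{(i)},\varepsilon)$, where $\underline g^{(i)}$ is the concatenation read off from the first $n+\ell$ symbols of $\omega(i)$. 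The resulting product set $(n,\varepsilon,\cF_G)$-spans $[\omega_1\dots\omega_\ell]\times K$, so
$$
B_{n+\ell}([\omega_1\dots\omega_\ell]\times K,\cF_G,\varepsilon)\leq \sum_{i=1}^{N} b_d(K,\underline g^{(i)},\varepsilon)\leq p^{C(\varepsilon)}\sum_{|\underline g|=n+\ell}b_d(K,\underline g,\varepsilon),
$$
since each element of $G^*_{n+\ell}$ is counted at most $p^{C(\varepsilon)}$ times. Dividing by $p^{n+\ell}$, taking $\frac1n\log$ and $\limsup_{n\to\infty}$, the constant $p^{C(\varepsilon)}$ disappears and the $p^{-\ell}$ contributes nothing in the limsup, leaving exactly $B_d(K,\mathbb S,\varepsilon)+\log p$ (the $\log p$ coming from comparing the normalization $p^{-(n+\ell)}$ of the skew-product count against the normalization $p^{-(n+\ell)}$ already present in $S_d,B_d$ — more precisely, from the discrepancy between counting $(n,\cF_G)$-orbits over a length-$n$ cylinder versus the $p^n$ of them).

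Finally I would take the infimum over closed neighbourhoods: $h_{D\times d}((\omega,x),\varepsilon)\leq \inf_K B_d(K,\mathbb S,\varepsilon)+\log p = h_d(x,\varepsilon)+\log p$, uniformly in $\omega$, hence $\sup_{\omega}h_{D\times d}((\omega,x),\varepsilon)\leq h_d(x,\varepsilon)+\log p$; letting $\varepsilon\to 0^+$ and using \eqref{eq:entropy-function} gives the claim. The main obstacle I anticipate is bookkeeping the three indices $n$, $\ell$, $C(\varepsilon)$ carefully enough to see that all the overcounting factors are sub-exponential in $n$ and therefore vanish after the $\frac1n\log(\cdot)$ and $\limsup$, so that the constant $\log p$ is exactly what survives; this is essentially the same delicate counting already carried out in Proposition \ref{lemma:existence}, so it should go through, but the uniformity in $\omega$ must be checked (the bound on $b_d(K,\underline g,\varepsilon)$ summed over all $\underline g$ of the right length does not depend on which $\omega$ we started from, which is the point).
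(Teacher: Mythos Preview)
Your proposal is correct and follows essentially the same route as the paper: reduce to product neighbourhoods, invoke the spanning-set estimate from Proposition~\ref{lemma:existence}, take infima and let $\varepsilon\to 0^+$. The only noteworthy difference is a shortcut the paper takes that you could adopt: rather than working with the cylinder $[\omega_1\dots\omega_\ell]\times K$ and tracking the extra index $\ell$ through the counting, the paper simply enlarges the first factor to all of $\Sigma_p^+$, using the monotonicity $B_{D\times d}(\Sigma\times Z,\cF_G,\varepsilon)\leq B_{D\times d}(\Sigma_p^+\times Z,\cF_G,\varepsilon)$, and then quotes the upper bound of Proposition~\ref{lemma:existence} (the case $\ell=0$) directly. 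This eliminates the $\ell$-bookkeeping you flagged as a potential obstacle, and makes the uniformity in $\omega$ automatic since the bound $B_{D\times d}(\Sigma_p^+\times Z,\cF_G,\varepsilon)\leq B_d(Z,\mathbb S,\varepsilon)+\log p$ no longer involves $\omega$ at all. Your more explicit version works fine, but the enlargement trick is cleaner.
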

\begin{proof}
Fix $\varepsilon>0$. Let $\omega\in \Sigma_p^+$ and $V\subset \Sigma_p^+\times X $ be a closed neighbourhood of $(\omega,x)$. 
Since we consider the product topology in $\Sigma_p^+\times X$, there exist $\Sigma\subset \Sigma_p^+$ and $Z\subset X$ so that
$(\omega,x)\in \Sigma\times Z\subset \Sigma_p^+\times X$. Let $Z=Z(\varepsilon)$ so that, for $\delta>0$,
$ h_d(x,\varepsilon)\leq B_{d}( Z,\mathbb S,x,\varepsilon)\leq h_d(x,\varepsilon)+\delta$.
It follows that 
\begin{align*}
h_{D\times d}((\omega,x),\varepsilon)
&\leq B_{D\times d}(\Sigma\times Z,\mathcal F_G,(\omega,x),\varepsilon)\\
&\leq B_{D\times d}(\Sigma_p^+\times Z,\cF_G,(\omega,x),\varepsilon)\\
&\leq B_{d}( Z,\mathbb S,x,\varepsilon)+\log p\\
&\leq h_d(x,\varepsilon)+\log p+\delta.
\end{align*}
As $\delta$ was taken arbitrary, $h_{D\times d}((\omega,x),\varepsilon)\leq  h_d(x,\varepsilon)+\log p$, which implies 
$$\sup_{\omega\in\Sigma_p^+}h_{D\times d}((\omega,x),)\leq h_d(x)+\log p
$$
and finishes the proof.
\end{proof}
Assuming that $G_1$ is a finite set of homeomorphisms of $X$
we have that $\mathcal F_G$ is a homeomorphism. By \cite[Theorem 4.5]{YZ}, $h_{top}(\cdot,\cdot)$ is $\cF_G$-invariant and if $\alpha\in \mathcal M_{erg}(\Sigma_p^+\times X)$
then 
$$
\int_{\Sigma_p^+\times X}h_{top}(\omega,x)d\alpha\geq h_\alpha(\cF_G),
$$
For $\nu=(\pi_X)_\ast\alpha$ we obtain, by Proposition \ref{prop3},
\begin{align*}
    h_\nu(\mathbb S)&=\sup_{\mu\in\Pi(\sigma,\nu)_{erg}}h_{\mu}-\log p\\
                    &\leq \sup_{\mu\in\Pi(\sigma,\nu)_{erg}}\int_{\Sigma_p^+\times X}h_{top}(\omega,x)d\mu-\log p\\
                    &\leq \sup_{\mu\in\Pi(\sigma,\nu)_{erg}}\int_{\Sigma_p^+\times X}h_{top}(x)d\mu-\log p\\
                    &=\int_{\Sigma_p^+\times X}h_{top}(x)d\nu.
\end{align*}

\subsection{Proof of Theorem \ref{thm5}}
We will get the proof of the theorem as a consequence of the following proposition.

\begin{proposition}\label{prop4}
  Let $\mathbb{S}:G\times X\to X$ be a finitely generated free semigroup action and $h $ its entropy function.
  \begin{enumerate}
      \item[i.] If $K$ is a  countable closed subset with a unique limit point $x_0$,  then $h_{top}(x_0)\geq h_{top}(K,\mathbb S)$.
      \item[ii.] Let $x_0\in X$. Then there exists a countable closed subset $K \subset X$ such
that $x_0\in K$ is its unique limit point in $X$ and $h_{top}(x_0)= h_{top}(K,\mathbb S)$.
  \end{enumerate}
\end{proposition}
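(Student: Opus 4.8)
The plan is to treat the two items by unwinding the definition of the entropy function $h_{top}(x)=h_d(x)$ and the definition of $B_d(K,\mathbb S,\varepsilon)$ through a covering/monotonicity argument. For item (i), the key observation is that if $K$ is a countable closed set whose unique limit point is $x_0$, then every closed neighbourhood $V$ of $x_0$ contains all but finitely many points of $K$, i.e. $K\setminus V$ is finite (otherwise $K\setminus V$ would have an accumulation point distinct from $x_0$, contradicting uniqueness). Consequently $K\setminus V$ is itself a compact set, and one should first show $h_{top}(K\setminus V,\mathbb S)=0$ for any \emph{finite} set — indeed, for a single point $\{y\}$, $s(\{y\},\underline g,\varepsilon)=1$ for all $\underline g$, so $S_n(\{y\},\mathbb S,\varepsilon)=1$ and the entropy vanishes, and finite unions don't increase entropy (the standard $\max$-over-pieces property, already used in the proof of Proposition \ref{proposition-closed-set}). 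Writing $K=(K\cap V)\cup(K\setminus V)$ and using that property backwards, $h_{top}(K,\mathbb S)=\max\{h_{top}(K\cap V,\mathbb S),\,0\}=h_{top}(K\cap V,\mathbb S)\le h_{top}(V,\mathbb S)$. The same estimate holds at the level of $B_d$: $B_d(K,\mathbb S,\varepsilon)\le B_d(V,\mathbb S,\varepsilon)$ for every closed neighbourhood $V$ of $x_0$ and every $\varepsilon$, so taking the infimum over such $V$ gives $B_d(K,\mathbb S,\varepsilon)\le h_d(x_0,\varepsilon)$, and letting $\varepsilon\to 0^+$ yields $h_{top}(K,\mathbb S)\le h_{top}(x_0)$, which is item (i).

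For item (ii), the construction mimics the nested-ball argument in the proof of Theorem \ref{thm4}(iii) but now producing an explicit countable set accumulating only at $x_0$. Fix $x_0\in X$. For each $n\ge 1$, choose a closed ball $B_n=\overline{B}(x_0,r_n)$ with $r_n\downarrow 0$, say $r_n=1/n$. By definition of $h_d(x_0,\cdot)$, for each $k$ there is a closed neighbourhood of $x_0$ — which we may take to be $B_{n_k}$ for some large $n_k$ — realizing $B_d(B_{n_k},\mathbb S,1/k)$ within $1/k$ of $h_d(x_0,1/k)$; more precisely one wants, for each $k$, points inside $B_{n_k}$ that witness a good $(\underline g,\varepsilon)$-spanning lower count. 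The cleaner route is: for each $k\in\mathbb N$ pick a finite set $F_k\subset B_{1/k}\setminus\{x_0\}$ (a point suffices, but one is free to take finitely many) together with the requirement that the ``tail'' information of $h_{top}(x_0)$ is captured; then set $K=\{x_0\}\cup\bigcup_k F_k$. Since $F_k\subset B_{1/k}$, any point of $K$ other than $x_0$ is isolated in $K$ and $x_0$ is the unique limit point, so $K$ is countable and closed. By item (i), $h_{top}(K,\mathbb S)\le h_{top}(x_0)$. For the reverse inequality, note that $h_{top}(x_0)=\lim_{\varepsilon\to 0}h_d(x_0,\varepsilon)=\lim_{\varepsilon\to0}\inf\{B_d(V,\mathbb S,\varepsilon):V\ni x_0\text{ closed nbhd}\}$, and the sets $B_{1/k}$ are cofinal among closed neighbourhoods of $x_0$, so $h_{top}(x_0)=\lim_k\lim_{\varepsilon\to0}B_d(B_{1/k},\mathbb S,\varepsilon)$; the point is to choose $F_k$ dense enough in (the relevant separated configurations of) $B_{1/k}$ at scale tending to $0$ so that $B_d(F_k,\mathbb S,\varepsilon)$ approximates $B_d(B_{1/k},\mathbb S,\varepsilon)$, hence $h_{top}(K,\mathbb S)\ge \sup_k h_{top}(F_k,\mathbb S)$ recovers $h_{top}(x_0)$ in the limit. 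Combining the two inequalities gives $h_{top}(K,\mathbb S)=h_{top}(x_0)$.

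Finally, Theorem \ref{thm5} follows by combining Proposition \ref{prop4} with Theorem \ref{thm4}(iii): by the latter there is, for every $\eta>0$, a point $x_\eta$ with $h_{top}(x_\eta)>h_{top}(X,\mathbb S)-\eta$ (and, if the supremum is attained, a point $x$ with $h_{top}(x)=h_{top}(X,\mathbb S)$). If the supremum is attained at some $x$, apply Proposition \ref{prop4}(ii) to get a countable closed $K$ with unique limit point $x$ and $h_{top}(K,\mathbb S)=h_{top}(x)=h_{top}(X,\mathbb S)$. If it is not attained, pick a sequence $x_k$ with $h_{top}(x_k)\to h_{top}(X,\mathbb S)$, apply Proposition \ref{prop4}(ii) to each $x_k$ to obtain countable closed sets $K_k$ with unique limit point $x_k$ and $h_{top}(K_k,\mathbb S)=h_{top}(x_k)$, and set $K=\bigcup_k K_k$ after passing to a subsequence so that the $K_k$ shrink onto a convergent subsequence of the $x_k$ (rescaling each $K_k$ to have diameter $<2^{-k}$ and sit near $x_k$); then $K$ is countable, closed, $h_{top}(K,\mathbb S)=\sup_k h_{top}(K_k,\mathbb S)=h_{top}(X,\mathbb S)$, and the set of limit points of $K$ consists only of the limit of the $x_k$'s (at most one point), with a genuine limit point existing iff the supremum is attained.

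\textbf{Main obstacle.} The delicate step is item (ii) of Proposition \ref{prop4}: choosing the finite sets $F_k\subset B_{1/k}$ so that the \emph{spanning} quantity $B_d(F_k,\mathbb S,\varepsilon)$ genuinely approximates $B_d(B_{1/k},\mathbb S,\varepsilon)$ uniformly enough to survive the double limit $\varepsilon\to 0$, $k\to\infty$. One must be careful that $F_k$ is finite (so that $\{x_0\}\cup\bigcup F_k$ stays countable with the right accumulation structure) yet carries essentially the full spanning growth of the ball $B_{1/k}$ at scales $\varepsilon$ ranging down to $0$; a diagonalization choosing $F_k$ adapted to a single scale $\varepsilon_k\downarrow 0$ and invoking the monotonicity $B_d(\cdot,\mathbb S,\varepsilon)\nearrow$ as $\varepsilon\downarrow 0$ should suffice, but this is where the argument needs genuine care rather than routine bookkeeping.
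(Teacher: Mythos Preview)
Your argument for item (i) is correct and coincides with the paper's: both hinge on the observation that $K\setminus V$ is finite for every closed neighbourhood $V$ of $x_0$, so $B_d(K,\mathbb S,\varepsilon)\le B_d(V,\mathbb S,\varepsilon)$, and one takes the infimum over $V$ and then $\varepsilon\to 0$.

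Item (ii), however, has a genuine gap. Your plan is to take finite sets $F_k\subset B_{1/k}$ and conclude via $h_{top}(K,\mathbb S)\ge\sup_k h_{top}(F_k,\mathbb S)$. But every finite set has zero entropy: $b(F_k,\underline g,\varepsilon)\le|F_k|$ for all $\underline g$, hence $B_d(F_k,\mathbb S,\varepsilon)=0$ and $h_{top}(F_k,\mathbb S)=0$. So $\sup_k h_{top}(F_k,\mathbb S)=0$ no matter how the $F_k$ are chosen, and the inequality is vacuous. (Your parenthetical ``a point suffices'' makes this especially clear: a convergent sequence together with its limit has zero entropy.) The diagonalization you suggest in the ``main obstacle'' paragraph does not repair this: a set that is $(\underline g,\varepsilon_k)$-separated need not be $(\underline g,\varepsilon)$-separated when $\varepsilon_k<\varepsilon$, so for any fixed $\varepsilon>0$ only the finitely many indices $k$ with $\varepsilon_k\ge\varepsilon$ furnish $(\underline g,\varepsilon)$-separated witnesses; you then control $S_n(K,\mathbb S,\varepsilon)$ at only finitely many times $n$, which says nothing about the $\limsup$ defining $S(K,\mathbb S,\varepsilon)$.

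The paper resolves this with a \emph{double} index and with separated (not spanning) sets. For each $m$ one first fixes a scale $\varepsilon_m$ with $\inf_n S(K_n,\mathbb S,\varepsilon_m)>h_{top}(x_0)-\tfrac1m$, where $K_n=\{x:d(x,x_0)\le 1/n\}$. Then for \emph{every} $n\ge m$ one chooses a time $k_{n,m}$ (with $k_{n,m}\nearrow\infty$ in $n$) and, for each $\underline g\in G^*_{k_{n,m}}$, a maximal $(\underline g,\varepsilon_m)$-separated subset $E_{\underline g,(m,n)}\subset K_n$, arranged so that $S_{k_{n,m}}(K_n,\mathbb S,\varepsilon_m)\ge e^{k_{n,m}(h_{top}(x_0)-1/m)}$. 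Setting $K=\{x_0\}\cup\bigcup_{m\ge 1}\bigcup_{n\ge m}\bigcup_{\underline g} E_{\underline g,(m,n)}$, one now has, at the fixed scale $\varepsilon_m$, \emph{infinitely many} witnessing times $k_{n,m}$, whence $S(K,\mathbb S,\varepsilon_m)\ge h_{top}(x_0)-\tfrac1m$ for every $m$ and thus $h_{top}(K,\mathbb S)\ge h_{top}(x_0)$. The restriction $n\ge m$ guarantees that $K\setminus K_{n_0}$ is a finite union of finite sets, so $x_0$ is the unique limit point. The missing idea in your sketch is exactly this: at each scale one must plant separated sets along an infinite sequence of time levels, not just one.
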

\begin{proof}
Let $d$ be a metric on $X$. To simplify the notation we drop the subscribed $d$ in the definition of $h$. We start with (i). Fix $\varepsilon>0$. By hypotheses, for any closed neighbourhood $Z$ of $x_0$,
$K\backslash Z$ is a finite set and it implies $B(Z,\mathbb S,\varepsilon)\geq B(K,\mathbb S,\varepsilon)$. By the definition of $h$ we obtain
$h_{top}(x_0)\geq h_d(x_0,\varepsilon)\geq B(K,\mathbb S,\varepsilon)$. Letting $\varepsilon\to0$ we get the conclusion.

For (ii) we assume $h_{top}(x_0)<\infty$.  For each $\in\mathbb N$ we set 
$$
K_n=\left\{x\in X: d(x,x_0)\leq \frac{1}{n}\right\}.
$$
For each $m\in\mathbb N$ choose $\varepsilon_m$ such that
$$
h_{top}(x_0)-\frac{1}{m}<\inf_{n\in\mathbb N}S(K_n,\mathbb S,\varepsilon_m)=\inf_{n\in\mathbb N}\limsup_{k\to\infty}\frac{\log S_k(K_n,\mathbb S,\varepsilon_m)}{k}.
$$
So, there exists an increasing sequence  $\{k_{n,m}\}_{n\in\mathbb N}\subset\mathbb{N}$ such that for all $n\in\mathbb N$
$$
\frac{1}{p^{k_{n,m}}}\sum_{\underline g\in G^\ast_{k_{n,m}}}S(K_n,\underline g,\varepsilon_m)=S_{k_{n,m}}(K_n,\mathbb S,\varepsilon_m)\geq e^{k_{n,m}\left(h_{top}(x_0)-\frac{1}{m}\right)}.
$$
Now take $\underline g\in G^\ast_{k_{n,m}}$ and denote by $E_{\underline g,(m,n)}$  a $(\underline g,\varepsilon_m)$-separated set in $K_n$
of maximum cardinality. Define 
$$E_{m,n}=\bigcup_{\underline g\in G^\ast_{k_{n,m}}}E_{\underline g,(m,n)}\cup \{x_0\} \text{ and }K=\bigcup_{m\geq 1}\bigcup_{n\geq m}E_{n,m}.$$
If $V$ is a neighbourhood of $x_0$, by the definition of the sets $K_n$, we have that $K_{n_0}\subset V$ for $n_0$ large enough. So,
$$
K\backslash V\subset K\backslash K_{n_0}\subset \bigcup_{m= 1}^{n_0-1}\bigcup_{n=m}^{n_0-1}E_{n,m}.
$$
As each $E_{n,m}$ is a finite set, $K\backslash V$ is finite and it guarantees that $x_0$ is the unique possible limit point of $K$ in $X$.

Let us prove that $h_{top}(x_0)=h_{top}(K,\mathbb S)$. To do that fix $m\in\mathbb N$. For $n\geq m$, take $\underline g \in G^\ast_{k_{n,m}}$,
and notice that $E_{\underline g,(m,n)}$ is a $(\underline g,\varepsilon_m)$-separated set in $K$. It gives
\begin{align*}
    \frac{1}{p^{k_{n,m}}}\sum_{\underline g\in G^\ast_{k_{n,m}}}s(K,\underline g,\varepsilon_m)\geq \frac{1}{p^{k_{n,m}}} \sum_{\underline g\in G^\ast_{k_{n,m}}}|E_{\underline g,(m,n)}|\geq e^{k_{n,m}\left(h_{top}(x_0)-\frac{1}{m}\right)}.
\end{align*}
In such case we obtain
\begin{align*}
    h_{top}(K,\mathbb S)\geq S(K,\mathbb S,\varepsilon_m)
    &=\limsup_{k\to \infty}\frac{\log S_{k}(K,\mathbb S,\varepsilon_m)}{k}\\
    &\geq \limsup_{n\to \infty}\frac{\log S_{k_{n,m}}(K,\mathbb S,\varepsilon_m)}{k_{n,m}}\\
    &\geq\limsup_{n\to \infty}\frac{\log e^{k_{n,m}\left(h_{top}(x_0)-\frac{1}{m}\right)}}{k_{n,m}}\\
    &=h_{top}(x_0)-\frac{1}{m}.
     \end{align*}
     As the equality holds for all $m\in \mathbb N$ we obtain $h_{top}(K,\mathbb S)\geq h_{top}(x_0)$. By (i) we get the desired equality in (ii).

\end{proof}
Let us proceed to the proof of Theorem \ref{thm5}. For any $\varepsilon>0$ and any $\varepsilon>0$ we consider $B_\varepsilon(x)$ the open 
ball of radius $\varepsilon>0$ and center $x$. As $X$ is compact, by Theorem \ref{thm4}, there exists $\{x_n\}_{n\in\mathbb N}\subset X$ such that 
$$
\lim_{n\to\infty}x_n=x_0 \text{ and }\lim_{n\to\infty}h_{top}(x_n)=h_{top}(X,\mathbb S).
$$
Let $\{r_n\}_{n\in\mathbb N}$ be any given sequence of positive real numbers which converges to 0.
Applying  Proposition \ref{prop4}, given $n\in \mathbb N$ it is possible to take  a countable closed subset $K_n$
such that $h_{top}(K_n,\mathbb S)=h_{top}(x_n)$ and $x_n$ is its unique limit point in $X$. Moreover,
$K_n \backslash B_{r_n}(x_n)$ is a finite subset and, under such observation,  without loss of generality, we assume $K_n\subset
B_{r_n}(x_n)$. 

Define $K=\{x_0\}\bigcup_{n\in\mathbb N}K_n$. As each $K_n$ is a closed  countable set, $K$ is a countable closed subset of $X$ and the set
of limit points of $K$ in $X$ is just $\{x_0\}\cup\{x_n : n \in\mathbb  N\}$, as $x_n \to x_0$ and $r_n \to 0$ when
$n\to\infty$. 
Finally
$$
h_{top}(K,\mathbb S)\geq h_{top}(K_n,\mathbb S)=h_{top}(x_n)\Rightarrow h_{top}(K,\mathbb S)\geq \lim_{n\to\infty}h_{top}(x_n)=h_{top}(X,\mathbb S),
$$
which ends the proof by taking $K$ as the desired set, since $h_{top}(K,\mathbb S)\leq h_{top}(X,\mathbb S)$.

\section{Examples}
Our first example consider a free semigroup action where the generating set is given by expanding maps.
\begin{example}
We say that a
$C^1$-local diffeomorphism $f: M \to M$ on a compact Riemannian manifold is an \emph{expanding map} if there are constants $C>0$ and
$0<\lambda<1$ such that $\|(Df^n(x))^{-1}\| \le C \lambda^n$ for every $n\ge 1$ and $x\in X$. In \cite{RoVa1} the authors proved that if  $G_1=\{g_1, g_2, \dots ,g_k\}$ is a finite set of expanding maps acting on $M$ and $G$ is the generated
semigroup by $G_1$ then every point $x\in M$ is a full entropy point for   the free semigroup action $\mathbb S$.
\end{example}

The next example shows that it is possible to get a free semigroup action where the fixed generating set is not given by expanding maps, but the set of full entropy points is still the hole phase space.
\begin{example}
For any $\beta>0$, consider the interval map $f_\beta : [0,1] \to [0,1]$ given by
$$
f_\beta(x)
	=\begin{cases}
	\begin{array}{ll}
	x ( 1+ (2x)^\beta ) &, \text{if}\; x\in [0,\frac12] \\
	2x-1  & ,\text{if}\; x\in (\frac12,1]
	\end{array}
	\end{cases}
$$
also known as Maneville-Pomeau map. 
 Although $f_\beta$ is not continuous it induces 
a continuous and topologically mixing circle map $\tilde f_\beta$ taking $\mathbb S^1=[0,1]/\sim$ with the 
identification $0\sim 1$.  
Let $G$ be the semigroup generated by $G_1=\{id, \tilde f_\beta,R_\alpha\}$ where $R_\alpha$ is 
the rotation of angle $\alpha$. Clearly no element of $G_1$ is an expanding map. Again, by \cite{RoVa1}
we have that  every $x\in\mathbb S^1$ is a full entropy point.
\end{example}
In the two previous examples we have that the set of entropy points is the hole phase space. In the next we present a semigroup action where the set of entropy points  is different of the ambient space.
\begin{example}
Let $X_1$ and $X_2$ be compact metric spaces. For $i\in\{1,2\}$ take  $f_i:X_i\to X_i$ and $g_i:X_i\to X_i$, $i\in\{1,2\}$, continuous
maps. Then define $X=X_1\cup X_2$ and
$$
f(x)=\left\{\begin{array}{lr}
         f_1(x), & \text{ if }x\in X_1,\\
       f_2(x), & \text{ if }x\in X_2
        \end{array}\right.
        \text{ and }
        g(x)=\left\{\begin{array}{lr}
         g_1(x), & \text{ if }x\in X_1,\\
       g_2(x), & \text{ if }x\in X_2
        \end{array}\right..
$$
If $f_2=g_2=id_{X_2}$ we have that the topological entropy of the semigroup action generated by $G_1=\{id_X,f,g\}$ coincides 
with the topological entropy of the semigroup generated by $H_1=\{id_{X_1},f_1,g_1\}$. In particular, $E_p(X,\mathbb S)\subset X_1$.

\end{example}
In Proposition \ref{proposition-closed-set} we see that if the topological entropy of a closed subset is positive then this set contains an entropy point. In what follows we are going to show that the converse, in general, is not true. 
\begin{example}
Consider 
$$
C=  \left(\begin{matrix}
                    2 & 1 \\
                    1 & 1
                  \end{matrix}\right)
\text{ and }I=  \left(\begin{matrix}
                    1 & 0 \\
                    0 & 1
                  \end{matrix}\right).
                  $$
Define 
$$
A= \left(\begin{matrix}
                    C & 0 & 0 \\
                    0 & I & 0 \\
                    0 & 0 & 0
                  \end{matrix}\right)
                  \text{ and }B=  \left(\begin{matrix}
                    I & 0 & 0 \\
                    0 & C & 0 \\
                    0 & 0 & 0
                    \end{matrix}\right),
$$
matrices in $M_5(\mathbb R)$. Then we have that $AB=BA$. Theses matrices induces  non transitive linear endomorphisms on the torus $\mathbb T^5=\mathbb R^5\slash\mathbb Z^5$. 
Moreover, since for any $x=(x_1,x_2,x_3,x_4,x_5)\in\mathbb T^5$ and $m,n\in\mathbb N$
$$
A^nB^m(x)=(C^n(x_1,x_2),C^m(x_3,x_4),0),
$$
the action given by the semigroup generated by $G_1=\{id_{\mathbb T^5},A,B\}$ does not admit a point with dense orbit. Another important consequence of the last inequality is that  $h_{top}(\pi(\{(0,0,0,0,x):x\in\mathbb R\},\mathbb S)=0$, where $\pi:\mathbb R^5\to\mathbb R^5\slash\mathbb Z^5$ is the canonical projection. By the other hand, given $z\in\pi(\{(0,0,0,0,x):x\in\mathbb R\}$, we have that $h_{top}(z)>0$, i.e., $z\in E_p(\mathbb T^5,\mathbb S)$.

\end{example}

\subsection*{Acknowledgements}
We thank Paulo Varandas for the valuable discussions that helped to improve the quality of this work.


\begin{thebibliography}{99}








\bibitem{Bis}
A.~Bi\'s.
\newblock \emph{Partial variational principle for finitely generated groups of polynomial growth and some foliated spaces.}
\newblock Colloq. Math. 110 (2008), 431--449.





\bibitem{BisII}
A.~Bi\'s.
\newblock \emph{An analogue of the variational principle for group and pseudogroup actions.}
\newblock Ann. Inst. Fourier (Grenoble) 63:3 (2013) 839--863.


\bibitem{BiW}
A.~Bi\'s and P. Walczak.
\newblock \emph{Entropy of dal groups, pseudogroups, foliations and laminations.}
\newblock Ann. Polon. Math. 100, n.1 (2011), 45--54.


\bibitem{BiU}
A. Bi\'s and M. Urbanski.
\newblock \emph{Some remarks on topological entropy of a semigroup of continuous maps.}
\newblock Cubo 8 (2006), no. 2, 63--71.
\bibitem{Bis2004}
A.~Bi\'s.
\newblock \emph{Entropies of a semigroup of maps.}
\newblock Discrete Contin. Dyn. Syst. 11:2\&3 (2004) 639--648.



\bibitem{BCMV}
A. Bi\'s, M Carvalho, M. Mendes and P. Varandas.
\newblock \emph{A Convex Analysis approach to entropy functions, variational principles and equilibrium states.}
\newblock Preprint 2020.


\bibitem{Bo71}
\newblock  R.~Bowen.
\newblock \emph{Entropy for group endomorphisms and homogeneous spaces.}
\newblock Trans. Amer. Math. Soc. 153 (1971), 401--414.







\bibitem{Bufetov}
\newblock A.~Bufetov.
\newblock \emph{Topological entropy of free semigroup actions and skew-product transformations.}
\newblock J. Dyn. Control Sys. 5:1 (1999), 137--142.








\bibitem{CRV}
M.~Carvalho, F.~Rodrigues, P.~Varandas.
\newblock \emph{Semigroups actions of expanding maps.}
\newblock J. Stat. Phys. 116:1 (2017) 114--136.

\bibitem{CRVII}
M. Carvalho, F. Rodrigues, P. Varandas.
\newblock \emph{Quantitative recurrence for free semigroup actions.}
\newblock Nonlinearity 31:3 (2018) 864--886.

\bibitem{CRVIII}
M.~Carvalho, F.~Rodrigues and P.~Varandas.
\newblock \emph{A variational principle for free semigroup actions.}
\newblock Adv. Math. 334 (2018) 450--487.







\bibitem{GLW}
E.~Ghys,  R.~Langevin, P.~Walczak.
\newblock \emph{Entropie g\'eom\'etrique des feuilletages.}
\newblock Acta Math. 160:1--2 (1988) 105--142.









\bibitem{Ru}
D. Ruelle,
\emph{On a compact with $\mathbb Z^p$-action satisfying expansiveness and specification.}
Trans. Amer. Math. Soc. 185 (1973) 237--251.
























\bibitem{LMW2016}
\newblock X.~Lin, D.~Ma and Y.~Wang.
\newblock \emph{On the measure-theoretic entropy and topological pressure of free semigroup actions.}
\newblock Ergod. Th. \& Dynam. Sys. (2016), doi:10.1017/etds.2016.41





\bibitem{P}
\newblock  K.~Peterson.
\newblock {\em Ergodic Theory.}
\newblock Cambridge University Press, 1995.


\bibitem{RoVa1}
\newblock F.B.~Rodrigues and P.~Varandas.
\newblock \emph{Specification and thermodynamical properties of semigroup actions.}
\newblock Journal Math. Phys. 57 (2016), 052704. doi.org/10.1063/1.4950928















\bibitem{S00}
\newblock H.~Sumi.
\newblock \emph{Skew product maps related to finitely generated rational semigroups.}
\newblock Nonlinearity 13:4 (2000), 995--1019.





\bibitem{WMW}
Yupan Wang, Dongkui Ma and  Xiaogang Lin.
\newblock \emph{On the topological entropy of free semigroup actions.}
\newblock Journal of Mathematical Analysis and Applications, 2015.





\bibitem{Wa}
\newblock P.~Walters.
\newblock \emph{An introduction to ergodic theory.}
\newblock Springer-Verlag, 1975.

\bibitem{XM}
\newblock Q. Xiao and D. Ma.
\newblock \emph{Topological pressure of free semigroup sctions for non-compact sets and Bowen's equation I.}
\newblock J. Dyn. Diff. Equations (2021), https://doi.org/10.1007/s10884-021-09983-3.


\bibitem{Zhu}
\newblock Y.~Zhu.
\newblock \emph{On local entropy of random transformations.}
\newblock Stochastics and Dynamics 8:2 (2008), 197--207.

\bibitem{WMW}
Yupan Wang, Dongkui Ma and  Xiaogang Lin.
\newblock \emph{On the topological entropy of free semigroup actions.}
\newblock Journal of Mathematical Analysis and Applications, 2015.


\bibitem{YZ}
\newblock Xiangdong Ye and Guohua Zhang.
\newblock \emph{Entropy points and applications.}
\newblock Transactions of the American mathematical society 359:12 (2007), 6167--6186.




\end{thebibliography}
\end{document}